\newcommand{\cR}{\mathcal{R}}
\newcommand{\ii}{\mathrm{i}}
\newtheorem{thm}{Theorem}
 \def\sH{{\mathfrak H}}
 \def\sC{{\mathfrak C}}
\def\sK{{\mathfrak K}}
\newtheorem{lem}[thm]{Lemma}
\newtheorem{exms}[thm]{Examples}
\newtheorem{prop}[thm]{Proposition}
\theoremstyle{definition}
\newtheorem{defn}[thm]{Definition}
\newtheorem{rem}[thm]{Remark}
\newtheorem{cor}[thm]{Corollary}
\newcommand{\cC}{\mathcal{C}\,}
\newcommand{\cJ}{\mathcal{J}\,}
\newcommand{\gA}{\mathfrak{A}\,}
\newcommand{\cM}{\mathcal{M}\,}
\newcommand{\cL}{\mathcal{L}\,}
\newcommand{\cD}{\mathcal{D}\,}
\newcommand{\cH}{\mathcal{H}\,}
\newcommand{\ov}{\overline}
\newcommand{\R}{\mathrm{Re}}
\newcommand{\I}{\mathrm{Im}}
\newcommand{\cQ}{\mathcal{Q}}
\newcommand{\cA}{\mathcal{A}}
\newcommand{\cN}{\mathcal{N}}
\newcommand{\cS}{\mathcal{S}}
\newcommand{\cU}{\mathcal{U}}
\newcommand{\cG}{\mathcal{G}}
\newcommand{\cP}{\mathcal{P}}
\newcommand{\gC}{\mathfrak{C}}
\newcommand{\gT}{\mathfrak{T}}
\newcommand{\gv}{\mathfrak{v}}
\def\uphar{{\upharpoonright\,}}
\author{Yury Arlinskii}
\author{Konrad Schm\"udgen}
\begin{document}
\begin{title}[On $C$-Symmetric and $C$-Self-adjoint Unbounded Operators]
{On $C$-Symmetric and $C$-Self-adjoint Unbounded Operators on Hilbert Space}
\end{title}

\address{Volodymyr Dahl East Ukrainian National University, Kyiv, Ukraine} \email{yury.arlinskii@gmail.com}
\address{University of Leipzig, Mathematical Institute, Augustusplatz 10/11, D-04109 Leipzig, Germany}
\email{schmuedgen@math.uni-leipzig.de}

\maketitle

\begin{abstract}
Let $C$ be a conjugation on a Hilbert  space $\cH$. A densely defined linear operator $A$ on $\cH$ is called $C$-symmetric if $CAC\subseteq A^*$ and $C$-self-adjoint if $CAC=A^*$. Our main results describe all $C$-self-adjoint extensions of $A$ on $\cH$. Further, we prove a $C$-self-adjointness criterion based on quasi-analytic vectors and we characterize $C$-self-adjoint operators in terms of their polar decompositions.
\end{abstract}

\bigskip

\textbf{AMS  Subject  Classification (2020)}.
 47B02.\\

\textbf{Key  words:} conjugation, complex symmetric operator, complex self-adjoint operator, polar decomposition

\bigskip

\section {Introduction}

In this paper we continue our study (see \cite{arlinskii}) of $C$-symmetric and $C$-self-adjoint operators on Hilbert space. Our main aim is to investigate extensions of $C$-symmetric operator to $C$-self-adjoint operators. Our previous paper \cite{arlinskii} dealt with extensions of non-densely defined {\it contractive} $C$-symmetric operators to contractive $C$-self-adjoint  operators acting on the whole Hilbert space. The present paper is about densely defined {\it unbounded} $C$-symmetric operators and $C$-self-adjoint operators.

Suppose that $C$ is a conjugation and $A$ is a densely defined closed $C$-symmetric operator on a Hilbert space $\cH$, that is, $CAC\subseteq A^*$.  Conjugations have been first used in the  extension theory of symmetric operators by J. von Neumann \cite{vN}. The notion of a $C$-symmetric operator was introduced by I.M. Glazman \cite{glazman}. Under the assumption that $A$ has points of regular type, $C$-symmetric operators $A$ and $C$-self-adjoint extensions of $A$ have been investigated  in pioneering papers such as \cite{zh}, \cite{kochubei}, \cite{knowles}, and others. In this case, criteria for the $C$-self-adjointness have been obtained, a boundary triplet was constructed, and it was shown that $A$ has a $C$-self-adjoint extension on $\cH$. However, in contrast to ``ordinary" symmetric operators, there are $C$-symmetric operators for which the regularity field is empty. As noted in \cite[Example 5.6]{race}, the differential operator $\frac{d^2}{dx^2}- 2\ii  e^{2(1+\ii) x}$ on $L^2(0,+\infty)$ is such an example.

In the meantime there exists an extensive literature about $C$-symmetric operators, $C$-self-adjoint operators and conjugations. We mention only a few important references dealing with unbounded operators such as \cite{zh}, \cite{galindo},  \cite{kochubei}, \cite{knowles80}, \cite{knowles}, \cite{race}, \cite{raikh_tsek}, \cite{EdEv}, \cite{brown}, \cite{camara}.

In this paper we study a densely defined closed $C$-symmetric operator $A$ on a Hilbert space $\cH$ without any assumption on its regularity field. Then again, as shown in \cite{galindo}, $A$ has  a $C$-self-adjoint extension on $\cH$.

In Section \ref{extensions}, we prove and develop  our main result (Theorem \ref{gua30a}), which describes all $C$-self-adjoint extensions of $A$ on $\cH$ and provide explicit formulas for these extensions.

To derive these results we associate an operator block matrix $\gA$, defined by (\ref{matrixtbt1}),  to $A$ which is a symmetric and $\gC$-symmetric operator for some conjugation $\gC$ on $\cH\oplus \cH$. This allows one to derive a  number of useful formulas for $A$. Further, to each  $C$-self-adjoint extension of $A$ there corresponds a self-adjoint and simultaneously  $\gC$-self-adjoint extension of the symmetric operator $\gA$. We show
that the formulas
\[
\left\{\begin{array}{l}  \cD(A_{\cJ})=\cD(A)\dot+(I-A^*C \cJ)\cN((A^*C)^2+I),\\[2mm]
A_{\cJ} (f+(I-A^*C \cJ)h)=Af+(CA^*C+C\cJ)h,\\[2mm]
 f\in\cD(A),\; h\in\cN( (A^*C)^2+I)
\end{array}\right.
\]
establish a
 one-to-one correspondence between all $C$-self-adjoint extensions of $A$ and all conjugations $\cJ$ of the Hilbert space $\cN((A^*C)^2+I)$, which anticommute with the skew- conjugation $A^*C\uphar\cN((A^*C)^2+I)$, i.e., $\cJ A^*Cf=-A^*C\cJ f$ for all $f\in \cN((A^*C)^2+I)$.
Note that the matrix trick is an old and powerful tool which has been successfully applied in various  parts of  operator theory; for instance, it is useful for the study of adjoint pairs, see   e.g.
\cite{brown} or \cite{sch22}.

In Section \ref{basic}, we develop a number of (new and known) basic results on $C$-symmetric operators and $C$-self-adjoint  extensions. Also, we rederive some known formulas and facts from \cite{galindo}, \cite{knowles80}, \cite{knowles} and \cite{race} in this manner.

In Section \ref{append}, we treat various aspects of the parametrization of $C$- self-adjoint extensions. We discuss the  relationship of conjugations with  orthonormal bases and investigate the solutions of the operator equation \eqref{30aagua} which is crucial for the description of C-self-adjoint extensions.

In Section \ref{quasianalytic}, we apply Nussbaum's theorem \cite{nussbaum} to the matrix $\gA$ and derive a $C$-self-adjointness criterion for a $C$-symmetric operator $A$ be means of quasi-analytic vectors for the operator $AC$.

 In Section \ref{polardecomposition}, we give some characterizations of $C$-self-adjointness in terms of the polar decomposition.

\section{Basic Notions and Some General Results on $C$-Self-adjoint Operators}\label{basic}

For a linear operator $A$ on a Hilbert space, we write $\cD(A)$ for its domain, $\cR(A)$ for its range, $\cN(A)$ for its kernel and  $A^*$ for its adjoint. The projection on a closed subspace $\cG$ is denoted  $P_\cG$.

The most important notions for this paper are introduced in the following definitions.

\begin{defn} A \emph{conjugation} on a Hilbert space $\cH$ is a conjugate-linear mapping $C$ of $\cH$ on $\cH$ such that $C^2x=x$  and
\begin{align}\label{conjugation}
\langle Cx,Cy\rangle =\langle y,x\rangle \quad \textrm{for}~~~x,y\in \cH.
\end{align}
\end{defn}
Clearly, condition (\ref{conjugation}) in Definition \ref{conjugation} can be replaced by
\begin{align*}
 \langle Cx,y\rangle =\langle Cy,x\rangle \quad \textrm{for}~~~x,y\in \cH.
\end{align*}

Throughout this paper,  {\bf$C$ denotes a conjugation on a Hilbert space $\cH$. }

\begin{defn}
A densely defined linear operator $A$ on $\cH$ is called\\
$\bullet$~  \emph{$C$-symmetric} if ~ $CAC\subseteq A^*$,\\
$\bullet$~  \emph{$C$-self-adjoint} if ~ $CAC= A^*$,\\
$\bullet$~  \emph{$C$-real} if ~ $CAC\subseteq A$.
\end{defn}
It is easily verified that $A$ is $C$-symmetric if and only if $A\subseteq CA^*C$, or equivalently, if
\begin{align*}
\langle Ax,Cy\rangle =\langle x, CA y\rangle \quad \textrm{for}~~~x,y\in \cD(A).
\end{align*}
This equation can be taken as a definition of $C$-symmetry  for non-densely defined linear operators.

The following proposition characterizes the $C$-self-adjointness of a closed extension in terms of  domains only.
\begin{prop}\label{kvit01aa} Suppose  $A$ is a densely defined $C$-symmetric linear operator on $\cH$. Let $\widetilde{A}$ be a closed extension of $A$ on $\cH$. Then $\widetilde{A}$ is $C$-self-adjoint if and only if
\begin{align}\label{domcond}
\cD(\widetilde{A}^*)=C\cD(\widetilde{A}).
\end{align}
\end{prop}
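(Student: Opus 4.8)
\emph{Proof idea.}
The forward direction will be a mere comparison of domains: if $\widetilde A$ is $C$-self-adjoint, then $\cD(\widetilde A^{*})=\cD(C\widetilde A C)$, and since $C$ is a bijection of $\cH$ with $C^{2}=\mathrm{id}$ we have $\cD(C\widetilde A C)=\{x\in\cH:\,Cx\in\cD(\widetilde A)\}=C\cD(\widetilde A)$, which is (\ref{domcond}). So the work is all in the converse; there I will use that $A$ is $C$-symmetric (the setting considered throughout this paper), and this cannot be omitted, since a $C$-self-adjoint $\widetilde A$ is in particular $C$-symmetric, which forces its restriction $A$ to be $C$-symmetric as well.

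For the converse, assume (\ref{domcond}) and set $B:=C\widetilde A C$, so that $\cD(B)=C\cD(\widetilde A)=\cD(\widetilde A^{*})$. My plan is to show that $B$ and $\widetilde A^{*}$ are \emph{both} restrictions of $A^{*}$; having the same domain, they will then coincide, which is exactly $C\widetilde A C=\widetilde A^{*}$. One inclusion, $\widetilde A^{*}\subseteq A^{*}$, is immediate from $A\subseteq\widetilde A$. Hence everything reduces to proving $C\widetilde A C\subseteq A^{*}$, i.e.\ that $Cw\in\cD(A^{*})$ with $A^{*}(Cw)=C\widetilde A w$ for every $w\in\cD(\widetilde A)$.

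To get this I will fix $w\in\cD(\widetilde A)$ and rewrite $\langle Av,Cw\rangle$ for an arbitrary $v\in\cD(A)$. Applying the conjugation identity $\langle Cx,y\rangle=\langle Cy,x\rangle$ and the $C$-symmetry of $A$ in the pointwise form $CAv=A^{*}Cv$ (namely $CAC\subseteq A^{*}$ evaluated at $Cv$), one obtains $\langle Av,Cw\rangle=\langle w,CAv\rangle=\langle w,A^{*}Cv\rangle$. Since $v\in\cD(A)\subseteq\cD(\widetilde A)$, hypothesis (\ref{domcond}) gives $Cv\in C\cD(\widetilde A)=\cD(\widetilde A^{*})$, so $A^{*}Cv=\widetilde A^{*}Cv$ by $\widetilde A^{*}\subseteq A^{*}$; and because $w\in\cD(\widetilde A)$ and $Cv\in\cD(\widetilde A^{*})$, the definition of $\widetilde A^{*}$ yields $\langle w,\widetilde A^{*}Cv\rangle=\langle \widetilde A w,Cv\rangle$, which one final application of $\langle Cx,y\rangle=\langle Cy,x\rangle$ converts into $\langle v,C\widetilde A w\rangle$. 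Thus $\langle Av,Cw\rangle=\langle v,C\widetilde A w\rangle$ for all $v\in\cD(A)$, which says precisely that $Cw\in\cD(A^{*})$ and $A^{*}(Cw)=C\widetilde A w$. This establishes $B\subseteq A^{*}$ and completes the argument.

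The step I expect to be the main obstacle is this last chain of identities: one must track the conjugate-linearity of $C$ with care and, above all, verify at each stage that the vector being fed into $\widetilde A$, $\widetilde A^{*}$, or $A^{*}$ really lies in the relevant domain — and it is exactly there that both the hypothesis (\ref{domcond}) and the $C$-symmetry of $A$ enter. The remaining ingredients (the forward direction, and the ``two restrictions of $A^{*}$ with the same domain'' bookkeeping) are routine.
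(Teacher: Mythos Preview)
Your proof is correct, but it takes a different route from the paper's. Both arguments share the same endpoint---showing that $C\widetilde A C$ and $\widetilde A^{*}$ are restrictions of $A^{*}$ with the common domain $\cD(\widetilde A^{*})$---but they reach it differently. The paper introduces the averaged operators $B:=\tfrac12(\widetilde A+C\widetilde A^{*}C)$ on $\cD(\widetilde A)$ and $T:=\tfrac12(\widetilde A^{*}+C\widetilde A C)$ on $\cD(\widetilde A^{*})$; it checks that $B\supseteq A$ (using the $C$-symmetry of $A$), deduces $T\subseteq B^{*}\subseteq A^{*}$, and then reads off $T=\widetilde A^{*}$, which forces $C\widetilde A C=\widetilde A^{*}$. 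Your argument bypasses these auxiliary operators entirely and establishes $C\widetilde A C\subseteq A^{*}$ by a direct inner-product chain, applying the conjugation identity and the pointwise $C$-symmetry $CAv=A^{*}Cv$ at each step. Your approach is more elementary and makes the r\^ole of the hypothesis \eqref{domcond} and of the $C$-symmetry of $A$ fully explicit at each domain check; the paper's averaging trick is slicker but slightly more opaque about where exactly those ingredients enter.
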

\begin{proof}
If $\widetilde{A}$ is $C$-self-adjoint, then $C\widetilde{A}C= \widetilde{A}^*$ and hence $\cD(\tilde{A}^*)=C\cD(\widetilde{A}).$

Conversely, suppose that (\ref{domcond}) holds. Then, by (\ref{domcond}) , the operator
\[ 
G:=\frac{1}{2}( \widetilde{A}+ C\widetilde{A}^*C)
\] 
has the domain $ \cD(G)=\cD(\widetilde{A})$. Since $A\subseteq \widetilde{A}$, we have $\widetilde{A}^*\subseteq A^*$. Hence $C\widetilde{A}^*C \lceil \cD(A)= CA^*C\lceil \cD(A)=A$, so $G$ is an extension of $A$.

Define $T:=\frac{1}{2}( \widetilde{A}^*+ C\widetilde{A}C)$ with domain $\cD(T)=\cD(\widetilde{A}^*)$. Clearly, $G^*\supseteq T$. Since $G\supseteq A$, we have $A^*\supseteq G^*\supseteq T$. Thus, $T$ is a restriction of $A^*$. Since $\cD(T)=\cD(\widetilde{A}^*)$ and $\widetilde{A}^*$ is a restriction of $A^*$, we conclude that $T=\widetilde{A}^*$. Hence
$C\widetilde{A}C=\widetilde{A}^*$ which means that $\widetilde{A}$ is $C$-self-adjoint.
\end{proof}

Suppose that  $A$ is a densely defined closed $C$-symmetric operator. We define $B:=CAC$ with $\cD(B)=C\cD(A)$.
Then we have
\[
B^*=CA^*C,\;  \cD( B^*)=C\cD( A^*),
\]
and
\[
B\subseteq A^*,\; A\subseteq B^*.
\]
This means that the operators $A$ and $B=CAC$ form an adjoint pair (or equivalently, a dual pair).

Further, for any $C$-symmetric extension $\widetilde A$ of $A$ one has
\[
A\subseteq \widetilde A\subseteq B^*,\: B\subseteq \widetilde A^*\subseteq A^*.
\]

Recall that if $T$ is a closed densely defined operator on $\cH$,  its domain becomes a Hilbert space, denoted  $\cH_T$, with respect to the scalar product
\begin{equation}\label{22agua}
\langle f,g\rangle_{\cH_T}=\langle f,g\rangle +\langle Tf,Tg\rangle, \quad f,g\in \cD(T).
\end{equation}

Set
\begin{equation}\label{23agua}
\cM_{B^*}:=(\cD( A))^\perp_{\cH_{B^*}},\;\cM_{A^*}:=(\cD( B))^\perp_{\cH_{A^*}}.
\end{equation}
 We easily verify that
\begin{equation}\label{kvit25a}\begin{array}{l}
\cM_{B^*}=\cN(I+A^*B^*)=\cN(I+A^*CA^*C),\\[2mm]
\cM_{A^*}=\cN(I+B^*A^*)=\cN(I+CA^*CA^*)
\end{array}
\end{equation}
Using \eqref{kvit25a}  it is not difficult to derive
\begin{equation}\label{trav01a}
B^*\cM_{B^*}=\cM_{A^*},\;A^*\cM_{A^*}=\cM_{B^*},\; C\cM_{B^*}=\cM_{A^*},
\end{equation}
and
\begin{equation}\label{trav21a}
\begin{array}{l}
||B^*f||_{\cH_{A^*}}=||f||_{\cH_{B^*}},\; ~~~ f\in\cM_{A^*},\\[2mm]
||A^*g||_{\cH_{B^*}}=||g||_{\cH_{A^*}},\;~~~ g\in\cM_{B^*},
\end{array}
\end{equation}
\begin{equation}\label{trav22aa}
\begin{array}{l}
||Cf||_{\cH_{A^*}}^2=||Cf||^2+||A^*Cf||^2=||f||^2+||CA^*Cf||^2\\
\qquad\qquad=||f||^2+||B^*f||^2=||f||^2_{\cH_{B^*}},\; f\in\cH_{B^*}.
\end{array}
\end{equation}

\begin{lem}\label{kvit25ab}
Suppose that $\widetilde A$ is a closed operator satisfying $A\subset\widetilde A\subset B^*$. Then there exist closed linear subspaces $\cL_{\widetilde A}$ and $\cL_{\widetilde A^*}$  of the Hilbert spaces $\cH_{B^*}$ and $\cH_{A^*}$, respectively, such that
\begin{equation}\label{cherv20aa}
\begin{array}{l}
\cD( \widetilde A)=\cD( A)\dot +
\cL_{\widetilde A}, \;\;\cL_{\widetilde A}\subseteq\cM_{B^*},\\[2mm]
\cD( \widetilde A^*)=\cD( B)\dot +
\cL_{\widetilde A^*}, \;\;\cL_{\widetilde A^*}\subseteq\cM_{A^*}.
\end{array}
\end{equation}
Moreover,
\begin{enumerate}
\item  $\cL_{\widetilde A^*}$ is the orthogonal to $B^*\cL_{\widetilde A}$\, in $\cM_{A^*}$
\item  if the linear subspace $\cL $ is the orthogonal complement to $B^*\cL_{\widetilde A}$ in $\cM_{A^*}$,
then $\cD(\widetilde A^*)=\cD(B)\dot+\cL.$
\end{enumerate}
\end{lem}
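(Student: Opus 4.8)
The plan is to realize the two direct-sum decompositions in \eqref{cherv20aa} as orthogonal splittings inside the graph Hilbert spaces $\cH_{B^*}$ and $\cH_{A^*}$, and then to extract the final ("moreover") assertion from a boundary bilinear form on $\cM_{B^*}\times\cM_{A^*}$ which turns out to coincide with the $\cH_{A^*}$-inner product transported by $B^*$. For the decompositions: since $A\subseteq B^*$ and $A$ is closed, for $f\in\cD(A)$ one has $B^*f=Af$, so $\|f\|_{\cH_{B^*}}=\|f\|_{\cH_A}$; hence $\cD(A)$ is a closed subspace of $\cH_{B^*}$, and the same argument (using $\widetilde A\subseteq B^*$) shows $\cD(\widetilde A)$ is closed in $\cH_{B^*}$. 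Splitting off $\cD(A)$ orthogonally inside the Hilbert space $\cD(\widetilde A)$ gives the closed subspace $\cL_{\widetilde A}:=\cD(\widetilde A)\ominus_{\cH_{B^*}}\cD(A)=\cD(\widetilde A)\cap\cM_{B^*}$, which satisfies $\cL_{\widetilde A}\subseteq\cM_{B^*}$ and $\cD(\widetilde A)=\cD(A)\,\dot+\,\cL_{\widetilde A}$. Taking adjoints in $A\subseteq\widetilde A\subseteq B^*$ yields $B\subseteq\widetilde A^*\subseteq A^*$, and the identical reasoning with $(B,\widetilde A^*,A^*)$ in place of $(A,\widetilde A,B^*)$ produces the closed subspace $\cL_{\widetilde A^*}:=\cD(\widetilde A^*)\cap\cM_{A^*}$ with $\cL_{\widetilde A^*}\subseteq\cM_{A^*}$ and $\cD(\widetilde A^*)=\cD(B)\,\dot+\,\cL_{\widetilde A^*}$. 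Since $B^*\cL_{\widetilde A}\subseteq B^*\cM_{B^*}=\cM_{A^*}$ by \eqref{trav01a}, the last statement of the lemma is meaningful.

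Next I would introduce the boundary form $\beta(x,g):=\langle B^*x,g\rangle-\langle x,A^*g\rangle$ for $x\in\cD(B^*)$, $g\in\cD(A^*)$. From $A\subseteq B^*$, $B\subseteq A^*$ and $B^{**}=B$ one checks that $\beta(x,g)=0$ whenever $x\in\cD(A)$ or $g\in\cD(B)$; hence, for $x\in\cD(\widetilde A)$ with $\cH_{B^*}$-orthogonal component $u\in\cL_{\widetilde A}$, one has $\beta(x,g)=\beta(u,g)$ by linearity in the first slot. The crucial computation is that, for $u\in\cM_{B^*}$ and $v\in\cM_{A^*}$,
\[
\beta(u,v)=\langle B^*u,v\rangle-\langle u,A^*v\rangle=\langle B^*u,v\rangle+\langle A^*(B^*u),A^*v\rangle=\langle B^*u,v\rangle_{\cH_{A^*}},
\]
where I use $A^*B^*u=-u$, valid by the kernel description $\cM_{B^*}=\cN(I+A^*B^*)$ from \eqref{kvit25a}, together with $B^*u\in\cM_{A^*}\subseteq\cD(A^*)$ from \eqref{trav01a}.

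Finally I would combine these. Fix $v\in\cM_{A^*}$. If $v\in\cL_{\widetilde A^*}=\cD(\widetilde A^*)\cap\cM_{A^*}$, then for every $u\in\cL_{\widetilde A}\subseteq\cD(\widetilde A)$, using $\widetilde A\subseteq B^*$ and $\widetilde A^*\subseteq A^*$, we get $\langle B^*u,v\rangle=\langle\widetilde Au,v\rangle=\langle u,\widetilde A^*v\rangle=\langle u,A^*v\rangle$, i.e.\ $\beta(u,v)=0$, hence $\langle B^*u,v\rangle_{\cH_{A^*}}=0$. Conversely, if $\langle B^*u,v\rangle_{\cH_{A^*}}=0$ for all $u\in\cL_{\widetilde A}$, then $\beta(x,v)=0$ for every $x\in\cD(\widetilde A)$, so $x\mapsto\langle\widetilde Ax,v\rangle=\langle B^*x,v\rangle=\langle x,A^*v\rangle$ is continuous for the $\cH$-norm on $\cD(\widetilde A)$, whence $v\in\cD(\widetilde A^*)$ and thus $v\in\cL_{\widetilde A^*}$. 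This shows $\cL_{\widetilde A^*}$ is precisely the orthogonal complement of $B^*\cL_{\widetilde A}$ in $\cM_{A^*}$. I expect the main obstacle to be the identity $\beta(u,v)=\langle B^*u,v\rangle_{\cH_{A^*}}$: this is the only point at which the special structure of the adjoint pair $(A,B)$, packaged in \eqref{kvit25a} and \eqref{trav01a}, actually enters; everything else is soft geometry in the graph Hilbert spaces together with the definition of the adjoint, needing only the preliminary care that $\cD(A),\cD(\widetilde A)$ (resp.\ $\cD(B),\cD(\widetilde A^*)$) be genuinely closed in the ambient graph space.
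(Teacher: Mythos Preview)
Your proof is correct and follows essentially the same route as the paper's. The only cosmetic difference is that you package the key identity as a boundary form $\beta(u,v)=\langle B^*u,v\rangle_{\cH_{A^*}}$ on $\cM_{B^*}\times\cM_{A^*}$, whereas the paper carries out the identical computation (using $A^*B^*u=-u$ from \eqref{kvit25a}) inline without naming it; the orthogonal splittings in the graph spaces and the two-directional argument for the ``moreover'' clause match the paper's proof step for step.
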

\begin{proof}
For the Hilbert spaces $\cH_{B^*}$ and $\cH_{A^*}$ there are  the orthogonal decompositions
\[
\cH_{B^*}=\cD( A)\oplus\cM_{B^*},\;~~ \cH_{A^*}=\cD( B)\oplus \cM_{A^*}.
\]
It follows that for a closed extension $\widetilde A$ such that $A\subset\widetilde A\subset B^*$ and its adjoint $\widetilde A^*$ we have the orthogonal decompositions in $\cH_{B^*}$ and  $\cH_{A^*}$:
\[
\cD( \widetilde A)=\cD( A)\oplus
\cL_{\widetilde A},\; ~~ \cD( \widetilde A^*)=\cD( B)\oplus\cL_{\widetilde A^*}
\]
where $\cL_{\widetilde A}$ and $\cL_{\widetilde A^*}$ are closed subspaces of  $\cM_{B^*} $ and  $\cM_{B^*}$, respectively.

If $h\in \cL_{\widetilde A}$ and $g\in \cL_{\widetilde A^*}$, then $B^*h\in\cD(A^*),$ $A^*B^* h=-h$,
$\langle \widetilde A h,g\rangle =\langle h, \widetilde A^*\rangle=\langle h, A^*g\rangle $ and hence
\[
\begin{array}{l}
\langle h, A^*g\rangle =\langle B^*h,g\rangle =\langle B^*h,g\rangle _{\cH_{A^*}}-\langle A^*B^*h, A^*g\rangle\\[2mm]
 =\langle B^*h,g\rangle_{\cH_{A^*}}+\langle h,A^*g\rangle.
 \end{array}
\]
Thus, $\langle B^*h,g\rangle_{\cH_{A^*}}=0$.
Therefore,  $\cL_{\widetilde A^*}$ is contained in the orthogonal complement of $B^*\cL_{\widetilde A}$ in $\cM_{A^*}.$

On the other side, if $\cL$ is the orthogonal in $\cM_{A^*}$ to $B^*\cL_{\widetilde A}$, then for each $h\in B^*\cL_{\widetilde A}$ and for each $y\in\cL$ we have
\[
\begin{array}{l}
0=\langle y, B^*h\rangle_{\cH_{A^*}}=\langle y, B^*h\rangle+\langle A^*y, A^*B^*h\rangle\\[2mm]
=\langle y, B^*h\rangle+\langle A^*y, A^*CA^*Ch\rangle=\langle y, B^*h\rangle-\langle A^*y,h\rangle.
\end{array}
\]
Hence for all $f_A\in\cD(A)$ and all $h\in\cL_{\widetilde A}$,
\[
\langle A^* y,f_A+ h\rangle=\langle y, A f_A\rangle+\langle y, B^*h\rangle=\langle y, B^*(f_A+h)\rangle=\langle y,\widetilde A(f_A+h)\rangle.
\]
It follows that $y\in\cD(\widetilde A^*)$. Thus, $\cL_{\widetilde A^*}$  is equal to the orthogonal complement of\, $B^*\cL_{\widetilde A}$\, in $\cM_{A^*}$
\end{proof}

In \cite[Lemma 3.3]{knowles} it is shown that each closed $C$-symmetric extension $\widetilde A$ of $A$ is a restriction of $B^*=CA^*C$ to
a closed subspace $\widetilde \cD$ of $\cH_{B^*},$   $\cD(A)\subset\widetilde\cD\subset \cD(CA^*C)$,  such that
$
\langle A^*Cy,x\rangle=\langle A^*Cx,y\rangle $ for all $x,y\in\widetilde \cD$.
The next result gives a more precise  characterization of $C$-self-adjoint extensions of $A$.

\begin{cor}\label{kvit24bc} Suppose that $A$ is a densely defined closed $C$-symmetric  operator on $\cH$ and retain the preceding notation. For a closed operator $\widetilde{A}$ on $\cH$ the following are equivalent:
\begin{enumerate}
\item[ (i)]
 $\widetilde{A}$ is a $C$-self-adjoint extension of  $A$.
\item[(ii)]

$\cL_{\widetilde A}$ is the orthogonal complement to $(A^*C)\cL_{\widetilde A}$ in $\cM_{B^*}$, i.e.
\begin{equation}\label{trav21bb}
\cL_{\widetilde A}\oplus (A^*C)\cL_{\widetilde A}=\cM_{B^*}.
\end{equation}
\end{enumerate}
\end{cor}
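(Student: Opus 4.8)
The plan is to reduce statement (i) to the single domain identity $\cD(\widetilde A^*)=C\cD(\widetilde A)$ via Proposition \ref{kvit01aa}, rewrite that identity through the decompositions of Lemma \ref{kvit25ab}, and then transport the resulting condition across the conjugation $C$. Note first that in both directions one is dealing with an operator $\widetilde A$ satisfying $A\subseteq\widetilde A\subseteq B^*$: if $\widetilde A$ is $C$-self-adjoint it is in particular $C$-symmetric and hence, as recorded after Proposition \ref{kvit01aa}, lies between $A$ and $B^*$; and in (ii) the symbol $\cL_{\widetilde A}$ is only defined when $A\subseteq\widetilde A\subseteq B^*$. So I may apply Lemma \ref{kvit25ab} and use the orthogonal decompositions $\cD(\widetilde A)=\cD(A)\oplus\cL_{\widetilde A}$ in $\cH_{B^*}$ and $\cD(\widetilde A^*)=\cD(B)\oplus\cL_{\widetilde A^*}$ in $\cH_{A^*}$, with $\cL_{\widetilde A}\subseteq\cM_{B^*}$, $\cL_{\widetilde A^*}\subseteq\cM_{A^*}$, and $\cL_{\widetilde A^*}$ equal to the orthogonal complement of $B^*\cL_{\widetilde A}$ in $\cM_{A^*}$.

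Next I would rewrite $\cD(\widetilde A^*)=C\cD(\widetilde A)$. Since $C\cD(A)=\cD(B)$, $C\cM_{B^*}=\cM_{A^*}$ by \eqref{trav01a}, and $C$ restricts to an isometric bijection of $\cH_{B^*}$ onto $\cH_{A^*}$ by \eqref{trav22aa}, one has $C\cD(\widetilde A)=\cD(B)\oplus C\cL_{\widetilde A}$ with $C\cL_{\widetilde A}$ a closed subspace of $\cM_{A^*}$. As $\cH_{A^*}=\cD(B)\oplus\cM_{A^*}$ is a direct sum and both $\cL_{\widetilde A^*}$ and $C\cL_{\widetilde A}$ lie inside $\cM_{A^*}$, comparing the $\cM_{A^*}$-components shows that $\cD(\widetilde A^*)=C\cD(\widetilde A)$ holds if and only if $\cL_{\widetilde A^*}=C\cL_{\widetilde A}$. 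Because $B^*$ restricts to an isometry of $\cM_{B^*}$ (with the $\cH_{B^*}$-norm) onto $\cM_{A^*}$ by \eqref{trav01a}--\eqref{trav21a} and $\cL_{\widetilde A}$ is closed, $B^*\cL_{\widetilde A}$ is closed in $\cH_{A^*}$; hence, by Lemma \ref{kvit25ab} and uniqueness of orthogonal complements, $\cL_{\widetilde A^*}=C\cL_{\widetilde A}$ is equivalent to the orthogonal direct sum decomposition $C\cL_{\widetilde A}\oplus B^*\cL_{\widetilde A}=\cM_{A^*}$ in $\cH_{A^*}$.

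The final step is to apply $C$ to this decomposition. A short computation gives $\langle Cu,Cv\rangle_{\cH_{B^*}}=\overline{\langle u,v\rangle_{\cH_{A^*}}}$ for $u,v\in\cH_{A^*}$ (since $B^*Cu=CA^*u$, as in \eqref{trav22aa}), so $C$ sends orthogonal pairs in $\cH_{A^*}$ to orthogonal pairs in $\cH_{B^*}$. Using $C^2=I$, $C\cM_{A^*}=\cM_{B^*}$, and the operator identity $CB^*=A^*C$ (immediate from $B^*=CA^*C$), the image of the above decomposition under $C$ is exactly $\cL_{\widetilde A}\oplus(A^*C)\cL_{\widetilde A}=\cM_{B^*}$, an orthogonal direct sum in $\cH_{B^*}$, which is condition (ii); since $C$ is an involution every step is reversible, giving the equivalence. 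I expect the only genuine care to be needed in the bookkeeping around the conjugate-linearity of $C$ when moving between the two graph-norm Hilbert spaces $\cH_{B^*}$ and $\cH_{A^*}$, and in verifying that $B^*\cL_{\widetilde A}$ is closed so that ``orthogonal complement'' is literally meaningful; both are settled by the isometry relations \eqref{trav01a}--\eqref{trav22aa}.
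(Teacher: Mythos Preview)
Your proof is correct and follows essentially the same route as the paper's: reduce (i) to the domain identity $\cD(\widetilde A^*)=C\cD(\widetilde A)$ via Proposition~\ref{kvit01aa}, translate this through Lemma~\ref{kvit25ab} into $C\cL_{\widetilde A}=\cL_{\widetilde A^*}=\cM_{A^*}\ominus B^*\cL_{\widetilde A}$, and then transport across $C$ using \eqref{trav01a}, \eqref{trav22aa}, $B^*=CA^*C$, and $C^2=I$. Your version is slightly more explicit than the paper's in justifying that $A\subseteq\widetilde A\subseteq B^*$ holds in both directions and that $B^*\cL_{\widetilde A}$ is closed, but the argument is the same.
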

\begin{proof}
By Proposition \ref{kvit01aa},  (i) is equivalent to $C\cD( \widetilde A)=\cD( \widetilde A^*)$.
From Lemma \ref{kvit25ab},
 \eqref{trav01a} and $\cD( B)=C\cD( A)$ it follows that
the equality $C\cD( \widetilde A)=\cD( \widetilde A^*)$ holds if and only if
\[
C\cL_{\widetilde A}=\cL_{\widetilde A^*}= \cM_{A^*}\ominus B^*\cL_{\widetilde A}
\Longleftrightarrow C\cL_{\widetilde A}\oplus (CA^*C)\cL_{\widetilde A}=\cM_{A^*}.
\]
The latter equality is equivalent to \eqref{trav21bb}, since
$B^*=CA^*C$, $C\cM_{B^*}=\cM_{A^*}$, $||Cf||_{\cH_{A^*}}^2=||f||^2_{\cH_{B^*}}$  for all $f\in \cH_{B^*}$, and $C^2=I$.
\end{proof}

A consequence is the following formula obtained in \cite[Theorem 3.1]{race}.

\begin{cor}\label{kvit30cca}
For each $C$-self-adjoint extension $\widetilde A$ of the densely defined closed $C$-symmetric operator  $A$ we have
\[
\dim \left(\cD(CA^*C)/\cD( \widetilde A\right)=\dim \left(\cD( \widetilde A)/\cD( A)\right).
\]
\end{cor}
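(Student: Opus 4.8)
The plan is to deduce the dimension identity from Corollary~\ref{kvit24bc} together with the structure established in Lemma~\ref{kvit25ab}. Fix a $C$-self-adjoint extension $\widetilde A$ of $A$. By Lemma~\ref{kvit25ab} we have the orthogonal decomposition $\cD(\widetilde A)=\cD(A)\dot+\cL_{\widetilde A}$ in $\cH_{B^*}$, so $\dim(\cD(\widetilde A)/\cD(A))=\dim\cL_{\widetilde A}$. On the other hand, $\cD(CA^*C)=\cD(B^*)=\cD(A)\oplus\cM_{B^*}$ in $\cH_{B^*}$ (this is the orthogonal decomposition recorded at the beginning of the proof of Lemma~\ref{kvit25ab}), so $\dim(\cD(CA^*C)/\cD(\widetilde A))=\dim(\cM_{B^*}\ominus\cL_{\widetilde A})=\dim(\cM_{B^*})-\dim(\cL_{\widetilde A})$ when $\cM_{B^*}$ is finite-dimensional, and the identity reads $\dim\cM_{B^*}=2\dim\cL_{\widetilde A}$ in general.

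The key input is then Corollary~\ref{kvit24bc}(ii): since $\widetilde A$ is $C$-self-adjoint, $\cM_{B^*}=\cL_{\widetilde A}\oplus(A^*C)\cL_{\widetilde A}$, an orthogonal direct sum inside the Hilbert space $\cH_{B^*}$. The map $A^*C$ restricted to $\cM_{B^*}$ is an isometry onto its image: indeed $C$ maps $\cH_{B^*}$ isometrically onto $\cH_{A^*}$ by \eqref{trav22aa} (and carries $\cM_{B^*}$ onto $\cM_{A^*}$ by \eqref{trav01a}), while $A^*$ maps $\cM_{A^*}$ isometrically onto $\cM_{B^*}$ by the second line of \eqref{trav21a} together with $A^*\cM_{A^*}=\cM_{B^*}$ from \eqref{trav01a}. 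Hence $(A^*C)\cL_{\widetilde A}$ is a closed subspace with $\dim\big((A^*C)\cL_{\widetilde A}\big)=\dim\cL_{\widetilde A}$, and Corollary~\ref{kvit24bc}(ii) gives
\[
\dim\cM_{B^*}=\dim\cL_{\widetilde A}+\dim\big((A^*C)\cL_{\widetilde A}\big)=2\dim\cL_{\widetilde A}.
\]
Combining the two displayed computations,
\[
\dim\big(\cD(CA^*C)/\cD(\widetilde A)\big)=\dim\cM_{B^*}-\dim\cL_{\widetilde A}=\dim\cL_{\widetilde A}=\dim\big(\cD(\widetilde A)/\cD(A)\big),
\]
which is the asserted equality.

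The only genuinely delicate point is bookkeeping with cardinal arithmetic when the spaces are infinite-dimensional: one should phrase the argument so that it says $\dim\cM_{B^*}=2\,\dim\cL_{\widetilde A}=\dim\cL_{\widetilde A}$ (as cardinals, when infinite), rather than literally subtracting dimensions, and note that the orthogonal complement of $\cL_{\widetilde A}$ in $\cM_{B^*}$ is $(A^*C)\cL_{\widetilde A}$, which is isometrically isomorphic to $\cL_{\widetilde A}$; this is exactly what Corollary~\ref{kvit24bc} supplies. The isometry property of $A^*C$ on $\cM_{B^*}$, needed to equate $\dim((A^*C)\cL_{\widetilde A})$ with $\dim\cL_{\widetilde A}$, is the other step to state carefully, but it is immediate from \eqref{trav01a}, \eqref{trav21a} and \eqref{trav22aa}.
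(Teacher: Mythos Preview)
Your argument is correct and is precisely the derivation the paper has in mind: it presents the corollary as an immediate consequence of Corollary~\ref{kvit24bc} (the decomposition $\cM_{B^*}=\cL_{\widetilde A}\oplus (A^*C)\cL_{\widetilde A}$), using that $A^*C\lceil\cM_{B^*}$ is the anti-unitary $\cS_{B^*}$ of Remark~\ref{trav21abc}, so that $(A^*C)\cL_{\widetilde A}\cong\cL_{\widetilde A}$. Your final paragraph already contains the clean version valid in all dimensions---$\cM_{B^*}\ominus\cL_{\widetilde A}=(A^*C)\cL_{\widetilde A}$ is isometrically isomorphic to $\cL_{\widetilde A}$---so you can drop the intermediate ``$\dim\cM_{B^*}-\dim\cL_{\widetilde A}$'' step, which is only meaningful in the finite case and is not needed.
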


\begin{rem}\label{trav21abc}
Define the following operators:
\[
\cS_{B^*}:=A^*C\uphar\cM_{B^*},\; \cS_{A^*}:=CA^*\uphar\cM_{A^*}.
\]
Due to \eqref{trav01a}, \eqref{trav21a}, \eqref{trav22aa} the operators $\cS_{B^*}$ and $\cS_{A^*}$ are anti-unitary in $\cM_{B^*}$ and $\cM_{A^*}$ w.r.t. the inner products of $\cH_{B^*}$ and $\cH_{A^*}$, respectively. Moreover, from \eqref{kvit25a} it follows that
$
\cS_{B^*}^2=-I_{\cM_{B^*}}$ and $\cS_{A^*}^2=-I_{\cM_{A^*}}.$
This means that the operators $\cS_{B^*}$ and $\cS_{A^*}$ are skew-conjugations (anti-involutions).

Now the equality \eqref{trav21bb} can be rewritten as  $$\cL_{\widetilde A}\oplus \cS_{B^*}\cL_{\widetilde A}=\cM_{B^*}.$$

Using Zorn's lemma it is proved in \cite{galindo}  that for any  skew-conjugation $\cS$ on a Hilbert space $\cH$ there exists an orthogonal decomposition $\cH=\cH_0\oplus \cS\cH_0$. Another proof of this fact can be derived from \cite[Lemma 7]{Luc}.
Applying this to the  skew-conjugation $\cS_{B^*}$  it follows that there exists a decomposition
\begin{equation}\label{vart4e}
\widetilde \cL\oplus_{\cH_{CA^*C}} (A^*C)\widetilde \cL=\cM_{CA^*C}.
\end{equation}

As shown in \cite{galindo} and  \cite{knowles80},
 \eqref{vart4e} yields the existence of a $C$-self-adjoint extension of $A$. From Lemma \ref{kvit25ab} and Corollary \ref{kvit24bc} follows that the operator
\[
\begin{array}{l}
\cD(\widetilde A)=\cD(A)\dot+\widetilde\cL,\\[2mm]
\widetilde A(f_A+h)=CA^*C(f_A+h)=Af_A+CA^*Ch,\;\; f_A\in \cD(A),\;h\in\widetilde\cL.
\end{array}
\]
 is a $C$-self-adjoint extensions of $A.$

Another $C$-self-adjoint extension $\widehat A$ of $A$ can be obtained from \eqref{vart4e} if we replace $\widetilde\cL$ by $\widehat\cL:=(A^*C)\widetilde \cL$ (because $(A^*C)\widehat\cL=\cS_{B^*}\widehat\cL=\widetilde\cL$),
\[
\begin{array}{l}
\cD(\widehat A)=\cD(A)\dot+(A^*C)\widetilde\cL,\\[2mm]
\widehat A(f_A+A^*Ch)=CA^*C(f_A+A^*Ch)=Af_A-Ch,\;\; f_A\in \cD(A),\;h\in\widetilde\cL.
\end{array}
\]
Descriptions of all linear manifolds $\cL_{\widetilde A}$ and $\cL_{\widetilde A^*}$ corresponding to $C$-selfadjoint extensions will be given in Section
\ref{extensions} (see Theorem \ref{gua30a}).
\end{rem}

\section{The Matrix Trick for $C$-Symmetric operators}

Let $A$ be densely defined operator on  $\cH$. To $A$ we asscociate
an operator $\gA$  on $\cH\oplus\cH$ with dense domain $\cD(\gA)=\cD(CAC)\oplus \cD(A)$ defined by the operator matrix
\begin{gather}\label{matrixtbt1}
\gA=\left(
\begin{array}{ll}
~~~ 0 & A \\
CA\,C & 0
\end{array}\right).
\end{gather}
Clearly, the adjoint operator $\gA^*$ of $\gA$ is given by the operator matrix
\begin{gather}\label{matrixtbt2}
\gA^*=\left(
\begin{array}{ll}
 0 & CA^*C \\
A^* & ~~~ 0
\end{array}\right),\; \cD(\gA^*)=\cD(A^*)\oplus\cD(CA^*C).
\end{gather}
Further, we define a conjugation $\gC$ on the Hilbert space $\cH\oplus\cH$ by
\begin{equation}\label{newcon}
\gC(x,y)=(Cy,Cx), \quad x,y\in \cH.
\end{equation}
\begin{lem}\label{gAsymmetric} For $(x,y)\in \cD(\gC\gA\gC)=\cD( \gA)$ we have
\begin{align*}
\gC\gA\gC(x,y)= \gA(x,y).
\end{align*}
\end{lem}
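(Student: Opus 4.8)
The plan is to verify the identity by direct computation, tracking how the conjugation $\gC$ and the block operator $\gA$ act on a vector $(x,y)\in\cD(\gA)$. Since $\cD(\gA)=\cD(CAC)\oplus\cD(A)$, applying $\gC$ first sends $(x,y)\mapsto(Cy,Cx)$, and this lands in $\cD(\gA)$ precisely when $Cy\in\cD(CAC)$ and $Cx\in\cD(A)$; the latter conditions are equivalent to $y\in C\cD(CAC)=\cD(AC)$... more carefully, one checks $\cD(\gC\gA\gC)=\cD(\gA)$ by noting $\gC$ maps $\cD(CAC)\oplus\cD(A)$ onto $C\cD(A)\oplus C\cD(CAC)=\cD(CAC)\oplus\cD(A)$, using $C\cD(A)=\cD(CAC)$ and $C\cD(CAC)=\cD(A)$ (the latter because $C\cD(CAC)=\{Cu:u\in C\cD(A)\}=\cD(A)$ as $C^2=I$). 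So the domain equality is automatic and needs only this brief remark.

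For the computation itself, I would proceed in three strokes. First, $\gC(x,y)=(Cy,Cx)$. Second, apply $\gA$: using the matrix form \eqref{matrixtbt1},
\[
\gA(Cy,Cx)=\bigl(A\,Cx,\; CAC\,Cy\bigr)=\bigl(ACx,\; CAy\bigr),
\]
where in the second component $CAC\cdot C=CA$ since $C^2=I$. Third, apply $\gC$ again:
\[
\gC\bigl(ACx,\,CAy\bigr)=\bigl(C\cdot CAy,\; C\cdot ACx\bigr)=\bigl(CCAy,\; CACx\bigr)=\bigl(Ay,\; CACx\bigr).
\]
On the other hand, $\gA(x,y)=(Ay,\,CACx)$ directly from \eqref{matrixtbt1}. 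The two expressions coincide, which is the claim.

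There is essentially no obstacle here: the statement is a formal consequence of $C^2=I$ and the block structure of $\gA$ and $\gC$. The only point deserving a word of care is the bookkeeping of domains so that each application of $\gA$ is legitimate, i.e. that the intermediate vector $(Cy,Cx)$ really lies in $\cD(\gA)$; this is handled by the domain remark above and the fact that $A$ being densely defined makes $CAC$ densely defined with $\cD(CAC)=C\cD(A)$. I would write the proof in roughly five lines: one sentence on the domain identification, then the displayed chain $\gC\gA\gC(x,y)=\gC\gA(Cy,Cx)=\gC(ACx,CAy)=(Ay,CACx)=\gA(x,y)$, and done.
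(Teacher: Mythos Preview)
Your proposal is correct and follows essentially the same approach as the paper: a one-line domain check using $C\cD(A)=\cD(CAC)$ and $C^2=I$, followed by a direct three-step computation $\gC\gA\gC(x,y)=\gC\gA(Cy,Cx)=\gC(ACx,CAy)=(Ay,CACx)=\gA(x,y)$, which is exactly what the paper does in block-matrix notation.
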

\begin{proof}
First we note that the domain of $\gC\gA\gC$ consists of all vectors $(x,y)\in \cH\oplus\cH$ for which  $\gC(x,y)=(Cy,Cx)$ is in the domain of the operator matrix $\gA$, that is, $Cx\in \cD(A)$ and $Cy\in \cD(CAC)$, or equivalently, $x\in \cD(CAC)$ and $y \in \cD(A)$. But this is precisely the domain of $\gA$. Thus $ \cD(\gC\gA\gC)=\cD( \gA).$

 For $(x,y)\in \cD(\gC\gA\gC)=\cD( \gA)$ we  compute \begin{gather*}
\left(
\begin{array}{ll}
 0 & C \\
C & 0
\end{array}\right) \left(
\begin{array}{ll}
 ~~~ 0 & A \\
C AC & 0
\end{array}\right)
\left(
\begin{array}{ll}
 0 & C \\
C & 0
\end{array}\right)\left(
\begin{array}{ll}
x  \\
y
\end{array}\right)\\ =\left(
\begin{array}{ll}
 0 & C\\
C & 0
\end{array}\right)\left(
\begin{array}{ll}
~~ ACx  \\
CAC^2y
\end{array}\right)=\left(
\begin{array}{ll} C^2AC^2y \\
~ CACx
\end{array}\right)\\=\left(
\begin{array}{ll}
~ Ay\\
 CACx
\end{array}\right)=\left(
\begin{array}{ll}
~~ 0 & A \\
CAC & 0
\end{array}\right)\left(
\begin{array}{ll}
 x \\
y
\end{array}\right). \end{gather*}
\end{proof}

\begin{prop}\label{opermatrix} The operator $A$ on $\cH$ is $C$-symmetric if and only if $\gA$ is a symmetric operator on $\cH\oplus \cH$. In this case, the symmetric operator $\gA$ on $\cH\oplus\cH$ is also $\gC$-symmetric. Further, $A$ is $C$-self-adjoint if and only if $\gA$ is self-adjoint on $\cH\oplus \cH$.
\end{prop}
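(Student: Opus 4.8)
The plan is to read everything off the explicit matrix representations (\ref{matrixtbt1}) of $\gA$ and (\ref{matrixtbt2}) of $\gA^*$, using only that $C$ is an involutive conjugation --- so that $\cD(CAC)=C\cD(A)$, $\cD(CA^*C)=C\cD(A^*)$, and $C^2=I$ --- together with the elementary equivalence recorded above that $CAC\subseteq A^*$ holds if and only if $A\subseteq CA^*C$.

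First I would treat the symmetry equivalence. Since $\gA$ and $\gA^*$ are given by off-diagonal $2\times2$ operator matrices, the inclusion $\gA\subseteq\gA^*$ splits componentwise into the two statements: (a) $\cD(CAC)\subseteq\cD(A^*)$ with $CACx=A^*x$ for $x\in\cD(CAC)$, that is, $CAC\subseteq A^*$; and (b) $\cD(A)\subseteq\cD(CA^*C)$ with $Ay=CA^*Cy$ for $y\in\cD(A)$, that is, $A\subseteq CA^*C$. Since each of (a) and (b) is by itself a reformulation of the $C$-symmetry of $A$, we conclude that $\gA$ is symmetric if and only if $A$ is $C$-symmetric. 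If $A$ is $C$-symmetric, then $\gA$ is densely defined, and Lemma \ref{gAsymmetric} gives $\gC\gA\gC=\gA$ on the common domain $\cD(\gA)$; combined with $\gA\subseteq\gA^*$ this yields $\gC\gA\gC=\gA\subseteq\gA^*$, which is precisely the $\gC$-symmetry of $\gA$.

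For the self-adjointness equivalence, $\gA=\gA^*$ amounts to $\gA\subseteq\gA^*$ together with $\cD(\gA^*)\subseteq\cD(\gA)$. The first condition is the $C$-symmetry of $A$ by the previous step. Comparing the domains $\cD(\gA)=\cD(CAC)\oplus\cD(A)=C\cD(A)\oplus\cD(A)$ and $\cD(\gA^*)=\cD(A^*)\oplus\cD(CA^*C)=\cD(A^*)\oplus C\cD(A^*)$, and using $C^2=I$, one sees that $\cD(\gA)=\cD(\gA^*)$ is equivalent to the single identity $C\cD(A)=\cD(A^*)$, i.e. $\cD(CAC)=\cD(A^*)$. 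Together with the inclusion $CAC\subseteq A^*$ furnished by $C$-symmetry, this identity upgrades to $CAC=A^*$, so $A$ is $C$-self-adjoint; conversely, $C$-self-adjointness gives both the $C$-symmetry of $A$ and $\cD(CAC)=\cD(A^*)$, hence $\gA=\gA^*$.

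There is no real obstacle here; the only point requiring care is the bookkeeping with the off-diagonal matrix --- recognizing that $\gA\subseteq\gA^*$ decouples into the two inclusions $CAC\subseteq A^*$ and $A\subseteq CA^*C$, each of which already characterizes $C$-symmetry, and, in the last step, remembering that self-adjointness of $\gA$ requires the reverse domain inclusion beyond mere symmetry, which is exactly where the equality $\cD(CAC)=\cD(A^*)$, and hence $CAC=A^*$, comes into play.
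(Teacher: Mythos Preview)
Your proof is correct and follows essentially the same route as the paper: both arguments read the inclusions $\gA\subseteq\gA^*$ and $\gA=\gA^*$ componentwise off the explicit matrices (\ref{matrixtbt1}) and (\ref{matrixtbt2}), reducing them to $CAC\subseteq A^*$, $A\subseteq CA^*C$ (respectively, equality). You spell out the domain bookkeeping for the self-adjointness direction more explicitly than the paper does, and you fold the $\gC$-symmetry claim (which the paper records just after the proof block via Lemma~\ref{gAsymmetric}) into the body of your argument, but there is no substantive difference in strategy.
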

\begin{proof}
Recall that  $A$ is $C$-symmetric if and only if $CAC\subseteq A^*$ and $A\subseteq CA^*C.$ Therefore, it follows from (\ref{matrixtbt1}) and (\ref{matrixtbt2}) that  $A$ is $C$-symmetric if and only if $\gA\subseteq  \gA^*$, that is, the operator $\gA$ is symmetric. By the same reason, $A$ is $C$-self-adjoint if and only if $CAC=A^*$ and  $A=CA^*C$, equivalently, $\gA=\gA^*$, that is $\gA$ is self-adjoint. The inclusion $\gA\subseteq \gA^*$ and Lemma \ref{gAsymmetric} imply in particular that the symmetric operator $\gA$ is $\cC$-symmetric.
\end{proof}

Because of Proposition \ref{opermatrix} the operator matrix $\gA$ is an important technical tool, since it allows one to reduce problems of a $C$-symmetric operator to a {\it symmetric} and $\gC$-symmetric operator.

From now on we assume that {\bf  the operator $A$ is $C$-symmetric and closed.}

Next we  study the operator $A$ by means of the operator matrix $\gA$ and derive a number of useful formulas and facts.

We begin with the deficiency spaces of the symmetric operator $\gA$. Recall that they  are defined by
$$\cN_+:=\cN(\gA^*-\ii I),\quad \cN_-:=\cN(\gA^*+\ii I).
$$
Let $(x_+,y_+)\in \cH\oplus \cH$. Then we have 
\begin{align}\label{cher20a}
\begin{array}{l}
(x_+,y_+)\in \cN_+ \Longleftrightarrow CA^*Cy_+=\ii x_+ \quad \textrm{and} \quad A^*x_+=\ii y_+\\[2mm]
\Longleftrightarrow \cN_+=\{(-\ii CA^*C h, h):h\in\cN((A^*C)^2+I)\}.
\end{array}
\end{align}
Likewise,
\begin{align}\label{cher20ab}
\begin{array}{l}
(x_-,y_-)\in \cN_-\Longleftrightarrow CA^*Cy_- =-\ii x_- \quad \textrm{and} \quad A^*x_-=-\ii y_-\\[2mm]
\Longleftrightarrow \cN_-=\{(\ii CA^*C h, h):h\in\cN((A^*C)^2+I)\}.
\end{array}
\end{align}

From these formulas we derive that a vector $(x,y)\in \cH\oplus \cH$ belongs to $\cN_+$ if and only $\gC(x,y)=(Cy,Cx)$ is in $\cN_-$, that is, $\gC$ is a bijection of $\cN_+$ and $\cN_-.$ This is in accordance with the general fact that for each symmetric and $C$-symmetric operator $T$ the conjugation $C$ maps
$\cN(T^*-\lambda I)$ onto $\cN(T^*+\overline{\lambda} I)$, $\I \lambda \neq 0$. In particular, the latter result and \eqref{cher20a}, \eqref{cher20ab} yield  that
\begin{align}\label{equn+n-}\dim \cN_+=\dim \cN_-=\dim \cN((AC^*)^2+I)=\dim\cN((CA^*)^2+I).
\end{align}
From (\ref{equn+n-}) it follows that the symmetric operator $\gA$ has a self-adjoint extension on $\cH\oplus\cH$ which implies that $A$ admits a $C$-self-adjoint extension on $\cH$.

Let us recall another well-known fact from the von Neumann theory of symmetric operators. If $T$ is a densely defined symmetric operator on a Hilbert space, then
\begin{align}
\cD(T^*)&=\cD(T)\oplus \cN((T^*)^2+I),\label{vna1} \\ \cN((T^*)^2+I)&=\cN(T^*-\ii I)\, \dot{+}\, \cN(T^*+\ii I),\label{vna2}
\end{align}
where the orthogonal sum in (\ref{vna1})  refers to the graph norm of $T^*$ and the sum $\dot{+}$ in (\ref{vna2}) means a direct sum of vector spaces.

We apply this to the symmetric operator $\gA$ on $\cH\oplus\cH$. From (\ref{matrixtbt2}) we obtain
\[ 
(\gA^*)^2=\left(
\begin{array}{ll}
  (CA^*)^2 & ~~~~0 \\
~~~~ 0 &(A^*C)^2
\end{array}\right).
\]
with domain $\cD((\gA^*)^2)=\cD((CA^*)^2)\oplus \cD((A^*C)^2)$. Inserting the formulas for $\gA^*$ and $\gA$ from (\ref{matrixtbt1}) and (\ref{matrixtbt2}) into (\ref{vna1}) we obtain (see also \eqref{23agua} and \eqref{kvit25a})
\begin{align} \cD(CA^*C)&=\cD(A)\oplus \cN((A^*C)^2+I),\label{race1} \\ \cD(A^*)&= \cD(ACA)\oplus \cN((CA^*)^2+I).\label{race2}
\end{align}
Note that the decompositions (\ref{race1}) and (\ref{race2}) were first proved by Race \cite{race}.

From \eqref{trav01a}, \eqref{cher20a} and \eqref{cher20ab} we get that
$$C \cN(A^*CA^*C+I)=\cN(CA^*CA^*+I)$$
and
\[
\begin{array}{l} 
\cN(A^*CA^*C+I)=\{x\in \cH: (Cx,x)\in \cN_+\dot+\cN_-  \},\\
 \cN(CA^*CA^*+I)=\{x\in \cH: (x,Cx)\in \cN_+\dot+\cN_-  \}.
\end{array}
\] 
We state an immediate consequence of formulas (\ref{race1}) and (\ref{race2}) separately as
\begin{cor}
A densely defined closed $C$-symmetric operator $A$ is $C$-self-adjoint if and only if  $C \cN(A^*CA^*C+I)=\{0\}$, or  equivalently, $\cN(CA^*CA^*+I)=\{0\}$.
\end{cor}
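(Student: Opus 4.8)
The plan is to read the corollary off from Proposition~\ref{opermatrix} together with the von Neumann formula (\ref{vna1}) for the block operator $\gA$. By Proposition~\ref{opermatrix}, the $C$-symmetric operator $A$ is $C$-self-adjoint if and only if $\gA$ is self-adjoint on $\cH\oplus\cH$. Since $A$ is closed, the operator $CAC$ is closed (because $C$ is a bounded bijection), hence so is the block operator $\gA$, which is moreover densely defined and symmetric. Applying (\ref{vna1}) to $\gA$ gives the graph-norm orthogonal decomposition $\cD(\gA^*)=\cD(\gA)\oplus\cN((\gA^*)^2+I)$. Because $\gA\subseteq\gA^*$ always holds, it follows that $\gA=\gA^*$ exactly when $\cD(\gA^*)=\cD(\gA)$, i.e.\ exactly when $\cN((\gA^*)^2+I)=\{0\}$.

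Next I would use the two bijections recorded just above the statement: $x\mapsto(x,Cx)$ maps $\cN(A^*CA^*C+I)$ bijectively onto $\cN((\gA^*)^2+I)$, and $x\mapsto(Cx,x)$ maps $\cN(CA^*CA^*+I)$ bijectively onto the same subspace. Combined with the previous paragraph, this shows that $A$ is $C$-self-adjoint if and only if $\cN((\gA^*)^2+I)=\{0\}$, if and only if $\cN(A^*CA^*C+I)=\{0\}$, and likewise if and only if $\cN(CA^*CA^*+I)=\{0\}$. Since $C$ is a conjugate-linear bijection of $\cH$ fixing $0$, the condition $\cN(A^*CA^*C+I)=\{0\}$ is equivalent to $C\cN(A^*CA^*C+I)=\{0\}$; alternatively one may simply invoke the identity $C\cN(A^*CA^*C+I)=\cN(CA^*CA^*+I)$ established in the text, which identifies the last two conditions directly.

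I do not expect a genuine obstacle here: the statement is a bookkeeping consequence of the matrix trick developed in this section. The single point that should be spelled out with care is the equivalence ``$\gA$ self-adjoint $\Longleftrightarrow$ $\cN((\gA^*)^2+I)=\{0\}$'', which is obtained from (\ref{vna1}) using $\gA\subseteq\gA^*$ and the fact that an operator contained in its adjoint with equal domains coincides with its adjoint; if one wishes the argument to be fully self-contained, one should also record the closedness of $\gA$, which is needed so that $\cD(\gA)$ (rather than the domain of the closure of $\gA$) appears on the right-hand side of (\ref{vna1}).
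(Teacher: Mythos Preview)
Your approach is correct and is essentially the paper's: the corollary is stated there as an immediate consequence of (\ref{race1})--(\ref{race2}), which are exactly what you obtain by inserting the block form of $(\gA^*)^2$ into (\ref{vna1}); you simply spell out the intermediate step ``$\gA$ self-adjoint $\Leftrightarrow \cN((\gA^*)^2+I)=\{0\}$'' explicitly.

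One caution: the ``bijections'' you quote from the text are not literally bijections onto $\cN((\gA^*)^2+I)$, since that kernel decomposes as $\cN(CA^*CA^*+I)\oplus\cN(A^*CA^*C+I)$ and a generic element $(u,v)$ need not satisfy $v=Cu$. This does not harm your argument, because you only need that these kernels vanish simultaneously, and your alternative route---using this block decomposition together with $C\cN(A^*CA^*C+I)=\cN(CA^*CA^*+I)$---gives that cleanly.
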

\section{$C$-Self-adjoint extensions of densely defined $C$-symmetric operators}\label{extensions}

The following theorem  gives a complete description of all $C$-self-adjoint extensions of a $C$-symmetric closed operator.

\begin{thm}\label{gua30a}
Let $A$ be a densely defined closed $C$-symmetric operator in $\cH$. There is a bijective correspondence between all $C$-self-adjoint extensions of $A$ and all conjugations $\cJ$ of the subspace $\cN((A^*C)^2+I)$ w.r.t. the inner product in $\cH_{CA^*C}$ (see \eqref{22agua}), satisfying the anticommutation relation
\begin{equation}\label{30aagua}
A^*C\cJ h=-\cJ A^*Ch, \quad h\in \cN((A^*C)^2+I).
\end{equation}
 The correspondence is given by the formulas
 \begin{equation}\label{21abgua}
\left\{\begin{array}{l}  \cD(A_{\cJ})=\cD(A)\dot+(I-A^*C \cJ)\cN((A^*C)^2+I),\\[2mm]
A_{\cJ} (f+(I-A^*C \cJ)h)=Af+C(A^*C+\cJ)h,\\[2mm]
 f\in\cD(A),\; h\in\cN( (A^*C)^2+I).
\end{array}\right.
  \end{equation}
  The adjoint operator $(A_{\cJ})^*=CA_\cJ C$ is of the form
\begin{equation}\label{21bcgua}
\left\{ \begin{array}{l}
\cD((A_\cJ)^*)=C\cD(A)\dot+C(I-A^*C\cJ)\cN((A^*C)^2+I),\\[2mm] 
(A_\cJ)^*(Cf+C(I-A^*C\cJ)g)=CAf+(A^*C+\cJ)g,\\[2mm]
f\in\cD(A),\; g\in\cN( (A^*C)^2+I).
\end{array}\right.
\end{equation}

\end{thm}
\begin{proof}
Let $\cJ$ be a conjugation in $\cN((A^*C)^2+I)$  w.r.t. the inner product in $\cH_{CA^*C}$.

\textbf{Step 1.} We will prove that the operator $A_\cJ$ of the form \eqref{21abgua} is a $C$-self-adjoint extension of $A$ if and only if the conjugation $\cJ$ satisfies \eqref{30aagua}.

Clearly, the operator $A_{\cJ}$ is an extension of $A$. Moreover, because $A^*C$ is skew-conjugation in  $\cN((A^*C)^2+I)$  (see Remark \ref{trav21abc}), the operator $A^*C\cJ$ is unitary in $\cN((A^*C)^2+I)$.

If $h\in\cN( (A^*C)^2+I)$, then we have 
\[
\begin{array}{l}
CA^*C(I-A^*C \cJ)h=CA^*Ch-CA^*CA^*C\cJ h\\[2mm]
=CA^*Ch+C\cJ h=C(A^*C+\cJ)h=A_{\cJ}((I-A^*C \cJ)h).
\end{array}
\]
Taking into account that $CA^*Cf_A=Af_A$ for all $f_A\in\cD(A)$, we get that $A_{\cJ}\subset CA^*C$.

From \eqref{21abgua} for the linear manifold $\cL_{A_{\cJ}}$ defined in Lemma \ref{kvit25ab} it follows the equality
\[
\cL_{A_{\cJ}}:=\cD(\cA_{\cJ})\cap\cN((A^*C)^2+I)=\cR(I-A^*C\cJ).
\]
Let us show that the equality
\[
\begin{array}{l}
\cL_{A_{\cJ}}\oplus A^*C \cL_{A_{\cJ}}(=
\cR(I-A^*C\cJ)\oplus A^*C\cR(I-A^*C\cJ))\\[2mm]
=\cN((A^*C)^2+I)
\end{array}
\]
holds if and only if the equality \eqref{30aagua} is fulfilled.

Throughout this proof we abbreviate
\begin{align}\label{defics}
\cS:=A^*C\uphar\cN((A^*C)^2+I).
\end{align}
Then the operator $\cS$ is a skew-conjugation  in $ \cN((A^*C)^2+I)=\cN(\cS^2+I).$
Since  $\cJ$ is a conjugation, the linear operator $\cS\cJ$ is unitary in $\cN(\cS^2+I) $ w.r.t. the scalar product of $\cH_{CA^*C}$. We have the equalities
\[
\cS(I-\cS\cJ)=\cS+\cJ=(I+\cS\cJ)\cJ,
\]
\[
\langle (I-\cS \cJ)x,(I+\cS\cJ)y\rangle_{\cH_{CA^*C}}=\langle x, (\cJ\cS+\cS\cJ)y \rangle_{\cH_{CA^*C}}.
\]
Therefore, we get the equivalence
\[
\begin{array}{l}
\cR((I-\cS \cJ))\bot\cR((I+\cS \cJ))\\[2mm]
\Longleftrightarrow\langle (I-\cS \cJ)x,(I+\cS\cJ)y\rangle_{\cH_{CA^*C}}=0\;\;\forall x,y\in\cN(\cS^2+I)\\[2mm]
\Longleftrightarrow \cS\cJ+\cJ\cS=0\Longleftrightarrow A^*C\cJ=-\cJ A^*C.
\end{array}
\]
Because $\cS\cJ$ is unitary in $\cN(\cS^2+I)$ and $(\cS\cJ)^{-1}=-\cJ\cS$, we conclude that $(\cS\cJ)^{-1}=\cS\cJ$, which is equivalent to $(\cS\cJ)^2 h=h$ for all $h\in \cN(\cS^2+I)$.
The equality \eqref{30aagua} implies that the operators
$$\cP_{\cJ}^{\pm}:=\cfrac{1}{2}(I\pm \cS\cJ)=\cfrac{1}{2}(I\pm A^*C\cJ)$$
 are mutually complementary orthogonal projections in $\cN((A^*C)^2+I)$ (w.r.t. the scalar product of $\cH_{CA^*C}$), i.e.,
\[
\cR(\cP^+_{\cJ})\oplus\cR(\cP^-_{\cJ})=\cN(\cP^-_{\cJ})\oplus\cN(\cP^+_{\cJ})=\cN((A^*C)^2+I),
\]
\[
\begin{array}{l}
\cL_{A_{\cJ}}= \cN(\cP^+_{\cJ})=\cR(\cP^-_{\cJ}),\\[2mm]
\;(A^*C)\cL_{A_\cJ}=\cN(\cP^-_{\cJ})=\cR(\cP^+_{\cJ}),\\[2mm]
\dim \cN(\cP^+_{\cJ})=\dim\cN(\cP^-_{\cJ}).
\end{array}
\]

Thus, applying Corollary \ref{kvit24bc} we get
\[
\begin{array}{l}
A^*C\cJ=-\cJ A^*C\Longleftrightarrow \cL_{A_{\cJ}}\oplus A^*C \cL_{A_{\cJ}}=\cN((A^*C)^2+I)\\[2mm]
\Longleftrightarrow A_{\cJ}~~~ \mbox{is a}\; C-\mbox{self-adjoint extension of}\; A.
\end{array}
\]
Since $(A_{\cJ})^*=CA_{\cJ}C$, it follows that $(\cA_{\cJ})^*$ takes the form \eqref{21bcgua}.

\textbf{Step 2.}
Let us show that any $C$-selfadjoint extension $\widetilde A$ of $A$ is of the form \eqref{21abgua} with conjugation $\cJ$ in $\cN((A^*C)^2+I)$ satisfying \eqref{30aagua}.

 Let $\widetilde A$ be a $C$-self-adjoint extension of $A$. Then $\widetilde A= C(\widetilde A\, )^*C$ and $(\widetilde A\, )^*=C\widetilde AC$. Consider the operator $\gT$ on $\cH\oplus \cH$ defined by the operator block matrix
\begin{equation}
\label{matrixtbt3}
\left\{\begin{array}{l}\gT=
\begin{pmatrix}
 0 & \widetilde A\cr C\widetilde A C & 0
\end{pmatrix},\\[2mm]
\cD(\gT)=\left\{(Cf_{\widetilde A},g_{\widetilde A}): f_{\widetilde A},\; g_{\widetilde A}\in\cD(\widetilde A)\right\}=C\cD(\widetilde A)\oplus\cD(\widetilde A).
\end{array}\right.
\end{equation}
Clearly, $(\widetilde A\, )^*= C\widetilde AC$ implies that $\gT$ is a self-adjoint operator on $\cH\oplus  \cH$. Since $A\subseteq \widetilde A$, $\gT$ is an extension of the symmetric operator $\gA$ defined by \eqref{matrixtbt2}.
Moreover, $\gT$ is a $\gC$-self-adjoint extension of the $\sC$-symmetric operator $\gA$ defined in \eqref{matrixtbt1} and the conjugation $\gC$ is defined in \eqref{newcon}.

From the von Neumann  self-adjoint extension theory of symmetric operators, see \cite[Proposition 13.25(ii)]{sch12}, we conclude that  any self-adjoint extension $\widetilde \gA$ of the symmetric operator $\gA$ is $\gC$-self-adjoint if and only if $\widetilde \gA$ is determined by
a unitary operator $U$ mapping  $\cN_+$ onto $\cN_-$, satisfying the condition
\begin{align}\label{conditionCU}\gC\,U \gC\, U=I.
\end{align}
It follows that for the operator $\gT$ in \eqref{matrixtbt3}
\begin{itemize}
\item there is a
 unitary operator $U:\cN_+\to\cN_-$ such that
 \[ 
\left\{\begin{array}{l}\cD(\gT):=\cD(\gA)\dot{+}(I-U)\cN_+, \\[2mm]
\gT:=\gA^*\lceil \cD(\gT)\Longleftrightarrow \gT(f+(I-U)v) =\gA f+\ii \, (I+U)v,\\[2mm]
\textrm{for}~~~ f=(Cf_A,g_A),\,  f_A,g_A\in \cD(A), \,v=(x_+,y_+)\in \cN_+,
\end{array}\right.
\]
\item the operator $U$ satisfies the equality \eqref{conditionCU}.
\end{itemize}
By \eqref{cher20a} and \eqref{cher20ab}, the operator $U$ acting from $\cN_+$ into $\cN_-$ takes the form
\begin{equation}\label{aug19b}
U(-\ii CA^*C h, h)=(\ii CA^*C \cU h, \cU h),\; h\in\cN((A^*C)^2+I),
\end{equation}
where $\cU$ is a linear operator on $\cN((A^*C)^2+I)$.
Since
\[
\begin{array}{l}
\left \|(-\ii CA^*C h, h)\right\|^2=||CA^*Ch||^2+||h||^2=||h||^2_{\cH_{CA^*C}},\\[2mm]
 \left \|(\ii CA^*C\cU h, \cU h)\right\|^2=||CA^*C\cU h||^2+||\cU h||^2=
||\cU h||^2_{\cH_{CA^*C}},
\end{array}
\]
the operator $U$ is unitary if and only if $\cU$ is unitary in $\cN((A^*C)^2+I)$ w.r.t. the norm in $\cH_{CA^*C}$.

 Now for any $h\in\cN(\cS^2+I)$ from \eqref{newcon} one gets
\[
\begin{array}{l}
\sC U(-\ii C\cS h, h)=(C\cU h,C(\ii C\cS\cU h))=(C\cU h,-\ii\cS\cU h)\\[2mm]
=(-\ii C\cS(-\ii\cS\cU h),-\ii\cS\cU h),\\[2mm]
U((-\ii C\cS(-\ii \cS\cU h),-\ii \cS\cU h)) 
=(\ii C\cS\cU(-\ii \cS\cU h),\cU(-\ii \cS\cU h)),\\[2mm]
\sC(-\ii C\cS h, h)=(Ch,C(-\ii C\cS h))=(Ch,\ii\cS h).
\end{array}
\]
Then (recall that  $\cS=A^*C\uphar\cN((A^*C)^2+I)$ by (\ref{defics})),
\[
\begin{array}{l}
U\sC U(-\ii C\cS h, h)=\sC(-\ii C\cS h, h)\Longleftrightarrow \left\{\begin{array}{l}\ii C\cS\cU(-\ii\cS\cU h)=Ch\\
\; \cU(-\ii\cS\cU h)=\ii\cS h\end{array}\right.\\
\Longleftrightarrow \left\{\begin{array}{l}\cS\cU \cS\cU h=h\\
\;- \cU \cS\cU h=\cS h\end{array}\right..
\end{array}
\]
The above calculations  and Remark \ref{gua31e} in Section \ref{append} show that
 \[
 (\sC U\sC U-I)\uphar\cN_+=0\Longleftrightarrow \cS\cU\cS\cU=I\uphar\cN(\cS^2+I)\Longleftrightarrow \cU=\cS\cJ,
 \]
 where $\cJ(=-\cS\cU)$ is a conjugation in $\cN(\cS^2+I)$.

Thus, due to \eqref{aug19b}, the operator $U$ satisfying \eqref{conditionCU} is of the form
\[
U(-\ii C\cS h, h)=(\ii C\cS \cU h, \cU h)=(-\ii C\cJ h, \cS \cJ h),\; h\in\cN(\cS^2+I).
\]
Hence
\[
\begin{array}{l}
(I-U)\cN_+=\left\{\left(-\ii C\cS h, h\right)-\left(-\ii C\cJ h, \cS \cJ h\right): h\in\cN(\cS^2+I)\right\}\\[2mm]
=\left\{\left(-\ii C(\cS-\cJ)h,(I-\cS \cJ) h\right): h\in\cN(\cS^2+I)\right\},\\[2mm]
(I+U)\cN_+=\left\{\left(-\ii C\cS h, h\right)+\left(-\ii C\cJ h, \cS \cJ h\right): h\in\cN(\cS^2+I)\right\}\\[2mm]
=\left\{\left(-iC(\cS+\cJ)h,(I+\cS \cJ) h\right): h\in\cN(\cS^2+I)\right\}.
\end{array}
\]
Since
\[
\begin{array}{l}
\cD(\gT)         =\cD(\gA)\dot{+} (I-U)\cN_+,\\[2mm]
\gT(f+(I-U)v) =\gA+\ii \, (I+U)v,\\[2mm]
\textrm{for}~~~ f=(Cf_A,g_A),\,  f_A,g_A\in \cD(A), \,v=(x_+,y_+)\in \cN_+,
\end{array}
\]
it follows that
\begin{equation}\label{rev02a}
\begin{array}{l}
\cD(\gT)=C\cD(\widetilde A)\oplus\cD(\widetilde A)
=C\cD(A)\oplus\cD(A)\dot+(I-U)\cN_+\\[2mm]
\left\{(Cf_A-\ii C(\cS-\cJ)h,g_A+(I-\cS \cJ) h),\; 
f_A,g_A\in\cD(A),\;h\in\cN(\cS^2+I)\right\}
\end{array}
\end{equation}
and
\begin{equation}\label{ver02ba}
\begin{array}{l}
\gT\left((Cf_A-\ii C(\cS-\cJ)h,g_A+(I-\cS \cJ) h)\right)\\
=\left(Ag_A,CAf_A\right)
+\ii\left(-\ii C(\cS+\cJ)h,(I+\cS \cJ) h\right)\\[2mm]
=\left(Ag_A+C(\cS+\cJ)h, CAf_A+\ii(I+\cS \cJ) h\right),\;
 f_A,g_A\in\cD(A),\;h\in\cN(\cS^2+I).
\end{array}
\end{equation}
Now from \eqref{matrixtbt3}, \eqref{rev02a} and \eqref{ver02ba} we see that the operator $\widetilde A$ coincides with operator $A_{\cJ}$ given by \eqref{21abgua}.
As it has been shown above, the operator $A_{\cJ}$ is $C$-self-adjoint if and only if the conjugation $\cJ$ in $\cN((A^*C)^2+I)$ satisfies \eqref{30aagua}, i.e., $\cJ$ anticommutes with $A^*C$.
\end{proof}

Descriptions of conjugations that anticommute with a given skew-conjugation can be found in Section \ref{append} (see Theorem \ref{07aver} and Corollary \ref{rev16a}).

\begin{thm}\label{rev02ba}
Suppose that $A$ is a densely defined closed $C$-symmetric operator on $\cH$ and  $\gA$ denotes the operator  given by the block matrix \eqref{matrixtbt1}. Let $\widetilde \gA$ be a self-adjoint and $\sC$-self-adjoint extension of  $ \gA$ and let $U$ be the unitary operator of $\cN_+$ on $\cN_-$, which determines the extension $\widetilde \gA$ via von Neumann formulas. Then
the following are equivalent:
\begin{enumerate}
\item[ (a)]
The operator $\widetilde \gA$ can be represented by the operator matrix
\begin{equation} \label{ver03ea}
\left\{ \begin{array}{l}\widetilde\gA=\begin{pmatrix}0&\widetilde F\cr\widetilde G &0\end{pmatrix},\\[2mm]
\cD(\widetilde\gA)=
\cD(\widetilde G)\oplus \cD(\widetilde F).
\end{array}\right.
\end{equation}
\item [ (b)]
The operator $U$ takes the form
\begin{equation}\label{rev04a}
\left\{\begin{array}{l}U(-\ii CA^*C h, h)=(-\ii  C\cJ h, A^*C\cJ h),\; h\in\cN(\cS^2+I),\\[2mm]
\cJ \mbox{is a conjugation in} \;\; \cN((A^*C)^2+I)\; \mbox{and}\;\;A^*C\cJ=-\cJ A^*C.
\end{array}\right.
\end{equation}
\end{enumerate}
Moreover, if condition \eqref{rev04a} is fulfilled, then $\widetilde F=A_\cJ$, $\widetilde G=CA_\cJ C$, where $A_\cJ$ is $C$-self-adjoint extension of $A$ of the form
\eqref{21abgua}.
\end{thm}
\begin{proof}
As it has been mentioned in the proof of Theorem \ref{gua30a}, there is a bijective correspondence between all self-adjoint and $\sC$-self-adjoint extensions $\widetilde \gA$ of $\gA$ and  unitary operators $U$ of $\cN_+$ on $\cN_-$ satisfying equation \eqref{conditionCU}. This correspondence is given by the
formulas
\begin{equation}\label{rev04b}
\left\{\begin{array}{l}\cD(\widetilde \gA):=\cD(\gA)\dot{+}(I-U)\cN_+,
\\[2mm]
\widetilde \gA:=\gA^*\lceil \cD(\gA)\Longleftrightarrow \widetilde \gA(f+(I-U)v) =\gA f+\ii \, (I+U)v,\\[2mm]
\textrm{for}~~~ f=(Cf_A,g_A),\,  f_A,g_A\in \cD(A), \,v=(x_+,y_+)\in \cN_+.
\end{array}\right.
\end{equation}
It is already proved in the preceding Theorem \ref{gua30a} that $U$ satisfies \eqref{conditionCU} if and only if $U$ is of the
form
\begin{equation}\label{rev05a}
U(-\ii C\cS h, h)=(-\ii C\cJ h, \cS \cJ h),\; h\in\cN(\cS^2+I),
\end{equation}
where $\cJ$ is a conjugation in $\cN(\cS^2+I)$ and $\cS:=A^*C\uphar \cN((A^*C)^2+I)$, see (\ref{defics}).
Hence the formulas \eqref{rev02a} and \eqref{ver02ba} for the domain and the action of the operator $\widetilde \gA$ are valid.

We also proved in Theorem \ref{gua30a} that
 if $\widetilde A$ is a $C$-self-adjoint extension  of $A$, then the operator matrix  $\widetilde\gA=\begin{pmatrix}
 0 & \widetilde A\cr C\widetilde A C & 0
\end{pmatrix}$ is a self-adjoint and $\sC$-self-adjoint extension of $\gA$ (see \eqref{matrixtbt3}),
$\cS\cJ=-\cJ\cS$ and $\widetilde A$ takes the form \eqref{21abgua},
i.e., we have (b) $\Longrightarrow $ (a) with $\widetilde F=A_\cJ,$ $\widetilde G=CA_\cJ C$.

Now we  prove the implication (a)$\Longrightarrow$ (b).

Let $\widetilde \gA$ be of the form
\eqref{ver03ea}, then
from \eqref{rev04b} and \eqref{rev05a} (see also \eqref{rev02a}) it follows that
\begin{multline*}
\cD(\widetilde\gA)=\cD(\widetilde G)\oplus \cD(\widetilde F)=\\[2mm]
=\left\{(Cf_A-\ii C(\cS-\cJ)h,g_A+(I-\cS \cJ) h):
\;\;f_A,g_A\in\cD(A),\;h\in\cN(\cS^2+I)\right\}.
\end{multline*}
Note that $$Cf_A-\ii C(\cS-\cJ)h=0\Longleftrightarrow f_A=0,\;(\cS-\cJ)h=0.$$
Hence
$$(\{0\},\cD(\widetilde F))=\left\{(0,g_A+ (I-\cS\cJ)h): g_A\in\cD(A), h\in\cN(\cS-\cJ)\right\}.$$
Since $\cS-\cJ=\cS(I+\cS\cJ)$, we get  $\cN(\cS-\cJ)=\cN(I+\cS\cJ)$. Consequently,
\begin{equation}\label{ver03ee}
(I-\cS\cJ)\cN(I+\cS\cJ)=\cR(I-\cS\cJ).
\end{equation}
Because $\cS\cJ$ is a unitary operator, it has a matrix representation of the form
\[
\cS\cJ=\begin{pmatrix}  -I_{\cM_-}&0&0\cr
0&I_{\cM_+}&0\cr 0&0&\cQ\end{pmatrix}
\]
w.r.t. the orthogonal decomposition
$$ \cN(\cS^2+I)=\cM_-\oplus\cM_+\oplus\cM_0,$$
where $\cM_\mp:=\cN(I\pm\cS\cJ)$,
$\cM_0:=\cN(\cS^2+I)\ominus\left(\cM_-\oplus\cM_+\right)$,  the operator $\cQ$ is unitary in $\cM_0$, and $\cN(I\pm\cQ)=\{0\}.$

It follows that
\[
I-\cS\cJ=\begin{pmatrix}  2I_{\cM_-}&0&0\cr
0&0&0\cr 0&0&I_{\cM_0}-\cQ\end{pmatrix}. 
\]
Hence
\[
\cR(I-\cS\cJ)=\cM_-\oplus\cR(I_{\cM_0}-\cQ).
\]
On the other side, the equality \eqref{ver03ee} gives that $\cR(I-\cS\cJ)=\cM_-$.
So, $\cM_0=\{0\}$ and $\cS\cJ$ is of the form
\[
\cS\cJ=\begin{pmatrix}  -I_{\cM_-}&0\cr
0&I_{\cM_+}\end{pmatrix}.
\]
Consequently, the unitary operator $\cS\cJ$ is self-adjoint in $\cN(\cS^2+I)$, i.e., $\cS\cJ=(\cS\cJ)^{-1}=-\cJ\cS.$
This proves the implication (a)$\Longrightarrow$(b).

Moreover, by Theorem \ref{gua30a} the equalities $\widetilde F=A_\cJ$ and $\widetilde G=C\widetilde A_\cJ C=(A_\cJ)^*$ hold (see \eqref{21abgua}
and \eqref{21bcgua}).

\end{proof}

\section{Orthonormal bases, conjugations, skew-conjugations, von Neumann's formulas, and $C$-selfadjoint extensions}\label{append}

In this section, we develop connections between orthonormal bases, conjugations, skew-conjugations, operators $U$ satisfying \eqref{conditionCU}.
Furthermore, in connection with Theorem \ref{gua30a}, for given skew-conjugation $\cS$ we describe conjugations that anticommute with $\cS$.

\begin{lem}\label{orthobasN+}
Let  $U$ be a unitary operator of $\cN_+$ on $\cN_-$. Then condition (\ref{conditionCU}) fulfilled if and only if
there exists an orthonormal basis $\{v_i:i\in J\}$ of the Hilbert space $\cN_+$ such that
\begin{align}\label{ucbasis}
Uv_i =\gC v_i  \quad \textrm{ for all}~~ i\in J.
\end{align}
\end{lem}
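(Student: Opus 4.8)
The plan is to recognize that condition (\ref{conditionCU}) asserts exactly that the conjugate-linear map $\gC U$ is a conjugation on the Hilbert space $\cN_+$, and then to invoke the standard fact that every conjugation on a Hilbert space admits an orthonormal basis of vectors fixed by it.

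First I would record the elementary properties of $\gC U$. Recall (see the discussion following (\ref{cher20ab})) that $\gC$ maps $\cN_+$ bijectively onto $\cN_-$, and since $\gC^2=I$ it also maps $\cN_-$ bijectively onto $\cN_+$. Hence, for a unitary operator $U$ of $\cN_+$ onto $\cN_-$, the composition $\gC U$ is a conjugate-linear bijection of $\cN_+$ onto itself, and for all $x,y\in\cN_+$ one has $\langle \gC Ux,\gC Uy\rangle=\langle Uy,Ux\rangle=\langle y,x\rangle$, because $\gC$ is a conjugation and $U$ is unitary. Moreover, since $U$ maps $\cN_+$ into $\cN_-$ and $\gC$ maps $\cN_-$ into $\cN_+$, the operator $\gC U\gC U$ coincides with the square of $\gC U$ regarded as a map of $\cN_+$ into itself. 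Consequently, condition (\ref{conditionCU}) holds if and only if $(\gC U)^2=I$ on $\cN_+$, that is, if and only if $\gC U$ is a conjugation on the Hilbert space $\cN_+$.

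Granting (\ref{conditionCU}), I would then apply the well-known fact that a conjugation on a Hilbert space possesses an orthonormal basis consisting of its fixed vectors. If one prefers a self-contained argument, the plan would be: take, by Zorn's lemma, a maximal orthonormal family $\{v_i\}$ in $\cN_+$ with $\gC Uv_i=v_i$; its closed span is $\gC U$-invariant, hence so is its orthogonal complement $\cL$; if $\cL\neq\{0\}$, pick a unit vector $e\in\cL$ and note that one of $e+\gC Ue$ and $\ii(e-\gC Ue)$ is a nonzero fixed vector of $\gC U$ lying in $\cL$ (the alternative would force $\gC Ue=-e$ and $\gC Ue=\ii e$ simultaneously), so that normalizing it contradicts maximality; thus $\cL=\{0\}$. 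Applied to the conjugation $\gC U$ on $\cN_+$, this produces an orthonormal basis $\{v_i:i\in J\}$ of $\cN_+$ with $\gC Uv_i=v_i$ for all $i$; applying $\gC$ and using $\gC^2=I$ yields $Uv_i=\gC v_i$, which is (\ref{ucbasis}).

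For the converse, suppose that $\{v_i:i\in J\}$ is an orthonormal basis of $\cN_+$ with $Uv_i=\gC v_i$ for all $i\in J$. Then $\gC Uv_i=\gC^2v_i=v_i$ for every $i$. Expanding an arbitrary $x\in\cN_+$ as $x=\sum_i\langle x,v_i\rangle v_i$ and using that $U$ is linear while $\gC$ is conjugate-linear, one obtains $\gC Ux=\sum_i\overline{\langle x,v_i\rangle}\,v_i$, and applying $\gC U$ once more (again conjugate-linear, with $\gC Uv_i=v_i$) returns $x$. Hence $(\gC U)^2=I$ on $\cN_+$, which is condition (\ref{conditionCU}). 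The only ingredient that is not pure bookkeeping is the existence of an orthonormal basis of fixed vectors for a conjugation, and I expect that — together with the small care needed in the maximality/Zorn step when $\cN_+$ is infinite-dimensional — to be the main, and still rather mild, obstacle.
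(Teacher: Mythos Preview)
Your proposal is correct and follows essentially the same route as the paper: you identify $\gC U$ as a conjugation on $\cN_+$, then produce an orthonormal basis of fixed vectors via a Zorn argument, and the converse is the same expansion-in-basis computation. The only slip is the parenthetical: the vanishing of $\ii(e-\gC Ue)$ forces $\gC Ue=e$, not $\gC Ue=\ii e$, but the intended contradiction ($\gC Ue=-e$ and $\gC Ue=e$ simultaneously) stands and the argument is sound.
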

\begin{proof}
Suppose that (\ref{conditionCU}) holds. Since $\gC U$ is a conjugate-linear mapping of $\cN_+$ into $\cN_+$,  (\ref{conditionCU}) implies that $\gC U$ is a conjugation of $\cN_+$.
Therefore, it follows from \cite[Lemma 1]{gp1} that $\cN_+$ has an orthonomal basis which is invariant under $\gC U$.

We give another proof of the latter result which is conceptually different from the one given in \cite{gp1}.
By Zorn's lemma there exists a maximal orthonormal subset $\{v_i:i\in J\}$ of $\cN_+$ satisfying $\gC Uv_i=v_i$ for all $i\in J$. We prove that this set is already an orthonormal basis of $\cN_+$. Assume  the contrary. Then  there exists a unit vector $v\in \cN_+$ such that $v\bot v_i$ for all $i\in J$. Then
\begin{align}\label{botvvi}
\langle \gC Uv,v_i\rangle =\langle \gC Uv,\gC Uv_i\rangle =\langle Uv_i,Uv\rangle =\langle v_i,v\rangle =0,
\end{align}
so $\gC Uv\bot v_i$ for all $i\in J$.

If $v=\gC Uv$, then we can  add $v$ to the set $\{v_i:i\in J\}$ which  contradicts  the maximality of $\{v_i:i\in J\}$.

Hence   $v\neq \gC U v$. Set $w:=\ii (v-\gC Uv )$. Then $w\neq 0$ and $w\bot v_i$ for all $i\in J$ by (\ref{botvvi}). Using the antilinearity of $\gC U$ and condition (\ref{conditionCU})  we derive
\begin{align*}
\gC U w=\gC \,(\ii v) -\gC U (- \ii \gC Uv)=-\ii\, \gC v\, +\ii \,\gC U\gC U v=-\ii\, \gC v+\ii\, v=w.
\end{align*}
Therefore, we can add $v':=\|w\|^{-1} w$ to the set $\{v_i:i\in J\}$ and obtain again a contradiction the the maximality of the set $\{v_i:i\in J\}$.

Clearly, $\gC\, Uv_i=v_i$ is equivalent to $Uv_i =\gC\, v_i$. Thus (\ref{ucbasis}) is satisfied.

The converse direction is  obvious. Indeed, (\ref{ucbasis}) implies $\gC\, U v_i=v_i$ and so $\gC\, U\gC\, Uv_i=v_i$. Since $\{v_i:i\in J\}$ is an orthonormal basis of $\cN_+$, it follows that  $\gC\, U\gC\, Uv=v$ for all $v\in \cN_+$.
\end{proof}

Let us  introduce some notation:
Suppose  $\gv=\{v_i:i\in J\}$ is an orthonormal basis of some Hilbert space $\cG$. Since each $v\in \cG$ is of the form $v=\sum_i \alpha_i v_i$ for some sequence $(\alpha_i)\in l_2(J)$  uniquely determined by $v$,  we  can define
\begin{align*}
C_\gv (v)=\sum\nolimits_i \, \ov{\alpha_i}\, v_i.
\end{align*}
Obviously,  $C_\gv$ is a conjugation on $\cG$ such that $C_\gv v_i=v_i$ for all $i\in J$. From Lemma \ref{orthobasN+} it follows that  each conjugation is of
this form, i.e., there is a  correspondence between  invariant orthonormal bases of $\cG$ and  conjugations of $\cG$.

Suppose now that $U$ is a unitary operator satisfying  (\ref{conditionCU})  and consider  an orthonormal basis $\gv=\{v_i=(x_i,y_i): i\in J\}$ of $\cN_+$ according to Lemma \ref{orthobasN+}. Then  $Uv_i=\gC v_i$ and for $v=\sum_i \alpha_i v_i\in \cN_+$ we obtain
\begin{align*}
Uv=\sum\nolimits_i \alpha_i Uv_i=\sum\nolimits_i \alpha_i \gC v_i=\gC\Big(\sum\nolimits _i \ov{\alpha_i} v_i\Big)=\gC(C_\gv(v)),
\end{align*}
so that
\begin{align}\label{UCproduct}
U=\gC\circ C_\gv.
\end{align}

Conversely, if $\gv$ is an orthonormal basis of $\cN_+$, then $U:=\gC\circ C_\gv$ defines a unitary operator from $\cN_+$ on $\cN_-$ such that $\gC U=C_\gv$ and hence $\gC U\gC U=I$, that is,  (\ref{conditionCU}) holds.

Thus, by (\ref{UCproduct}),  the unitary operator $U:\cN_+\to \cN_-$ depends only on the orthonormal basis $\gv$ of the Hilbert space $\cN_+$.
\begin{rem}
One might think of a conjugation operator as a a complex conjugation on a Hilbert space. Then, if $C$ is a conjugation on  some Hilbert space $\cG$,  for $x\in \cG$ we  write
\begin{align*}
x=\R \,x+ \ii \, \I \, x,\quad \textrm{where}~~~\R\, x:=\frac{1}{2}(x+ Cx),~ \I \, x:=\frac{1}{2\ii}(x-Cx).
\end{align*}
Let  $\widetilde \gA$ be the operator from  Theorem \ref{rev02ba}. Since $Uv_i=\gC v_i$,  formula (\ref{rev04b}) yields\, $\widetilde \gA(I-\gC)v_i=\ii (I+\gC) v_i$. Divided by $2\ii$ this read as
\begin{align*}
\widetilde \gA (\I \, v_i)=\R\, v_i, ~~ i\in J.
\end{align*}
\end{rem}

\begin{rem}\label{gua31e}
Let $Z$ be a conjugation on a Hilbert space $\cG$ which  maps a  closed subspace  $\sH$ on another closed subspace  $\sK$. Then
it is easy to verify (see \cite[Lemma 2.1]{arlinskii}) that the formula
\[
V:=ZJ_{\sH}
\]
gives a bijective correspondence between the set of all conjugations $J_{\sH}$ of $\sH$ and all unitary operators $V$ from $\sH$ onto $\sK$ satisfying the condition
\[
ZVZV=I_{\sH}.
\]
\end{rem}
\smallskip

The description of $C$-self-adjoint extensions  in Theorem \ref{gua30a} is based on conjugations which anticommute with a given skew-conjugation (see \eqref{30aagua}).  Now  we describe the structure of  pairs of anticommuting skew-conjugations and  conjugations and begin with an example.
\begin{exms}\label{exskewcon} Let $\cH_2$ be a two-dimensional Hilbert space with orthonormal basis $\{e,f\}$. Define
\begin{equation}
S(\alpha e+\beta f)=\ov{\alpha}\, f-\ov{\beta}\, e,~~~J(\alpha e+\beta f)=\ov{\alpha}\, f+\ov{\beta}\, e,\quad \alpha, \beta \in  \mathbb{C}.
\end{equation}
Then one verifies that $S$ is a skew-conjugation and $J$ is a conjugation on $\cH_2$ satisfying $SJ=-JS$. For the self-adjoint unitary operator $U:=SJ$ we have $Ue=-e$ and $Uf=f$.
\end{exms}

The following theorem shows that each pair of an anticommuting skew-conjugation and  a conjugations is an orthogonal direct sum of  pairs as in Example \ref{exskewcon}.
\begin{thm}\label{rev18aa}
Suppose  $\cH$ is a Hilbert space and $\{e_i,f_i:i\in I\}$ is an orthonormal basis of $\cH$. We define conjugate-linear mappings $S$ and $J$ of $\cH$ by
\begin{equation}\label{rev18bb}
\begin{array}{l}
 Se_i=f_i,\; Sf_i=-e_i,\\[2mm]
  Je_i=f_i, \; Jf_i=e_i,\quad i\in I.
  \end{array}
\end{equation}
Then  $S$ is a skew-conjugation and $J$ is a conjugation on $\cH$ such that
\begin{equation}\label{anticomSJ}
SJ=-JS.
\end{equation}
The linear operator $U:=SJ$ is a self-adjoint unitary and we have
\begin{equation*}
Ue_i=-e_i,~  Uf_i=f_i \quad\textrm{for}~~  i\in I.
\end{equation*}

Each pair $\{S,J\}$ of a skew-conjugation $S$ and conjugation $J$ on a Hilbert space  satisfying (\ref{anticomSJ}) is of this form.
\end{thm}
\begin{proof}
As in Example  \ref{exskewcon} it is straightforward to check that $S$ and $J$ defined by (\ref{rev18bb}) and $U=SJ$ have the desired properties.

We prove the last assertion.  Suppose that $S$ is a skew-conjugation and $J$ is a conjugation on a Hilbert space  $\cH$ such that (\ref{anticomSJ}) holds. Since $S$ is a skew-conjugation and $J$ is a conjugation,  $U:=SJ$ is a linear unitary operator. It follows from  (\ref{anticomSJ}) that $U$ is also self-adjoint. Hence $$\cH=\cN(U-I)\oplus \cN(U+I).$$
From  (\ref{anticomSJ}) we obtain that $US=-SU$. This implies that $S\cN(U\mp I)=\cN(U\pm I)$. In particular, $\cN(U+I)\neq \{0\}$.

Take a unit vector $e$ from $\cN(U+I)$ and define $\cH_2:={\rm span} \{e, Se, Je\}.$ Since $Ue=-e$, $\cH_2$ is invariant under $S$ and $J$. Set $f:=Se$.  Since $S$ is a skew-conjugation, $\|f\|=1$ and $$\langle f,e\rangle =\langle Se,e\rangle =-\langle Se,e\rangle=-\langle f,e\rangle ,$$ so that $f\bot e,$ and $Je=-JUe=-JSJe=SJ^2e=Se=f$. Hence $\{e,f\}$ is an orthonormal basis of $\cH_2$. We have $Sf=S^2e=-e$. Further, $f\in S\cN(U+I)=\cN(U-I)$, so that $Uf=f$, and $Jf=JUf=JSJf=-J^2Sf=-(-e)=e$.
Putting the preceding together, we have proved that the restrictions of operators $S$ and $J$ to the invariant subspace forms a pair as described in Example \ref{exskewcon}.

We show that $\cH_2^\bot$ is invariant under $S$ and $J$. Let $g\in \cH_2^\bot$. Then $\langle Sg, e\rangle=-\langle Se,g\rangle=0$ and $\langle Sg,Se\rangle=\langle e,g\rangle=0$, so $S$ leaves $\cH_2^\bot$ invariant. Similarly, $\cH_2^\bot$ is invariant under $J$. Hence the restrictions of $S$ and $J$ to $\cH_2^\bot$ are again a pair of an anticommuting skew-conjugation and a conjugation on the Hilbert space $\cH_2^\bot.$

It follows from Zorn' lemma  that there exists an orthonormal basis $\{e_i,f_i:i\in I\}$  of $\cH$ having the properties stated in the theorem.
\end{proof}
Clearly, if $S$ and $J$  are anticommuting skew-conjugation and conjugation, then operator $U:=SJ$ is unitary and self-adjoint and $J\cN(I+U)=\cN(I-U)$. Consequently, the equality
\begin{equation}\label{rev27ab}
\dim\cN(I-U)=\dim (I+U)
\end{equation}
holds. Below is the converse statement.

\begin{cor}\label{rev26a}
Each  unitary and self-adjoint operator $U$ with the property \eqref{rev27ab} is the product of anticommuting skew-conjugation and conjugation.
\end{cor}
\begin{proof}
Let $U$ be a unitary and self-adjoint operator with the property \eqref{rev27ab} in the Hilbert space $\cH$. Let $\{e_i: i\in I\}$ be an orthonormal basis in $\cN(U+I)$ and let $\{f_i:i\in I\}$ be an orthonormal basis in $\cN(U-I)$. Define a skew conjugation $S$ and a conjugation $J$ as in \eqref{rev18bb}.
Then by Theorem \ref{rev18aa} one has $SJ=-JS$ and
$$SJe_i=Sf_i=-e_i=Ue_i,\; SJf_i=Se_i=f_i=Uf_i,\; i\in I.$$
Hence, for $h\in\cH$ we have
\[
\begin{array}{l}
SJ h=SJ\left(\sum\limits_{i\in I} \left(\langle h,e_i\rangle e_i+\langle h,f_i\rangle f_i\right)\right)=S\left(\sum\limits_{i\in I} \left(\langle e_i,h\rangle f_i+\langle f_i,h\rangle e_i\right)\right)\\[2mm]
=\sum\limits_{i\in I} \left(-\langle h,e_i\rangle e_i+\langle h, f_i\rangle f_i\right)= Uh.
\end{array}
\]
\end{proof}

\begin{rem}\label{rev26b}
It is established in \cite{godluc} that each unitary operator in a Hilbert space is the product of two conjugations. Consequently, each unitary and self-adjoint operator is the product of two commuting conjugations.

\end{rem}

Let us look at the pair $\{S,J\}$ from a different point of view. We retain the assumptions and notations of Theorem \ref{rev18aa} and set
 $\cG=\overline{{\rm span\,}\{e_{i}:i\in I \}}.$ Then $\cG^\perp=\overline{{\rm span\,}\{f_{i} :i\in I\}}=S\cG$,
 $$\cH=\cG\oplus S\cG\quad  \textrm{and}\quad
J=S(P_\cG-P_{\cG^\perp}).$$
The next theorem develops pairs $\{S,J\}$ based on  the latter formulas.
\begin{thm}\label{07aver}
Let $S$ be a skew-conjugation on a Hilbert space $\cH$. Then there is a one-to-one correspondence between closed  subspaces $\cG$ in $\cH$ such that
\begin{equation}\label{rev13ab}
\cG\oplus S\cG=\cH
\end{equation}
and conjugations $J$ in $\cH$ which  anticommute with $S$ (that is, $SJ=-JS)$. This correspondence is  given by the expression
\begin{equation}\label{rev9a}
J:=S(P_\cG- P_{\cG^\perp}).
\end{equation}
\end{thm}
\begin{proof}
Suppose $\cG$ is a subspace in $\cH$ and $\cG^\perp=S\cG$. Then $\cG=S\cG^\perp$ and
$$P_{\cG}SP_{\cG}=0,\; P_{\cG^\perp}SP_{\cG^\perp}=0.$$
The operator $J$ of the form \eqref{rev9a} is an anti-unitary operator,
\[
\begin{array}{l}
J^2=S(P_\cG- P_{\cG^\perp})S(P_\cG-P_{\cG^\perp})\\[2mm]
=S(P_{\cG^\perp}SP_{\cG^\perp}+P_{\cG}SP_{\cG}-P_{\cG^\perp}SP_{\cG}-P_{\cG}SP_{\cG^\perp})\\[2mm]
=-S(SP_{\cG}+SP_{\cG^\perp})=-S^2=I
\end{array}
\]
and
\[
SJ=-(P_\cG-P_{\cG^\perp}),\; JS=-(SJ)^{-1}=(P_\cG-P_{\cG^\perp})^{-1}=P_\cG-P_{\cG^\perp}=-SJ.
\]

Conversely,  assume that $J$ is a conjugation anticommuting with $S.$ Then $(SJ)^2=I$, i.e., the operator $SJ$ is unitary and self-adjoint.
It follows that the operators $P^{\pm}_J:=(I\pm SJ)/2$ are orthogonal projections in $\cH$ and $P^+_J+P^-_J=I$.
Set $\cG_J:=\cR(P^{+}_J)=\cR(I+SJ).$ Since
\[
S(I+SJ)=S-J=(I-SJ)(-J)\Longrightarrow S\cR(I+SJ)=\cR(I-SJ),
\]
we get that $\cG_J\oplus S\cG_J=\cH.$
\end{proof}
Note that \eqref{rev13ab} and \eqref{rev9a} yield the equality $\cG^\perp=J\cG$.

The next statement  was established in \cite{galindo}.
\begin{prop}\label{rev18aaa}
Let $S$ be a skew-conjugation in a separable Hilbert space $\cH$.
Then there exists a subspace $\cG$ such that \eqref{rev13ab} holds.
\end{prop}
\begin{proof}
Let $e_1$ be a normalized vector in $\cH$ and let $f_1:=Se_1$. Then $f_1$ is normalized vector orthogonal to $e_1$ and $e_1=-Sf_1$. If
$H_1={\rm span\,}\{e_{1},f_{1}\}$, then $SH_1=H_1$. If $\dim \cH> 2$, then $SH_1^\perp=H_1^\perp$ and  we take an normalized vector $e_2\in H_1^\perp$ and set $f_2=Se_2$. Then $e_2\in H_1^\perp.$  We get $H_2={\rm span\,}\{e_{2},f_2\}$, $SH_2=H_2$, and $H_2\bot H_1$.
By induction we get an orthonormal basis $\{e_i, Se_i: i\in I\}$ in $\cH$ and the orthogonal decompositions
$$\cH=\bigoplus\limits_{i\in I}{\rm span\,}\{e_{i},Se_{i}\}= \overline{{\rm span\,}\{e_{i}:i\in I \}}\oplus\overline{{\rm span\,}\{Se_{i}:i\in I \}}.$$
So, for $\cG:=\overline{{\rm span\,}\{e_{i}:i\in I \}}$ the decomposition \eqref{rev13ab} holds.

 \end{proof}

\begin{rem}\label{rev13a}
(1) The equalities $\cR(I-JS)=\cG,$ $\cR(I+JS)=\cG^\perp=S\cG$ imply that $\dim\cR(I-JS)=\dim\cR(I+JS)$. Consequently, for two distinct conjugations $J_1$ and $J_2,$ anticommuting with the same skew-conjugation $S$, the unitary and self-adjoint operators $J_1S$ and $J_2S$ are unitarily equivalent.

(2)
It is proved in \cite[Lemma 7]{Luc} that for any conjugation $Z$ and for any skew-conjugation $S$ there exists a subspace $\cG$ such that
$$\cG\oplus Z\cG=\cG\oplus S\cG=\cH.$$
\end{rem}

The following construction enables us to obtain new conjugations that anticommute with the skew-conjugation $S$.
\begin{cor} \label{rev16a}
Suppose that $S$ is a conjugation and $\cG$ is a closed subspace of $\cH$ such that \eqref{rev13ab} holds. Let $J_\cG$ and $J_{\cG^\perp}$ be conjugations in $\cG$ and $\cG^\perp$, respectively.
Define two  one-parameter families of closed subspaces of $\cH:$
\begin{equation}\label{rev16b}
\widetilde \cG_t:=(I_\cG+t SJ_\cG )\cG,\; \widehat\cG_t:=(I_{\cG^\perp}+t SJ_{\cG^\perp})\cG^\perp,\; t\in\mathbb{R}.
\end{equation}
 Then
\[
\widetilde \cG_t\oplus S\widetilde \cG_t=\cH,\; \widehat \cG_t\oplus S\widehat \cG_t=\cH
\]
and
\begin{equation}\label{rev16c}
\widetilde J(t):=S(P_{\widetilde\cG_t}- P_{\widetilde\cG^\perp_t}),\;\widehat J(t):=S(P_{\widehat\cG_t}- P_{\widehat\cG^\perp_t})
\end{equation}
are  one-parameter families of conjugations anticommuting with $S$. Both families are continuous in the operator norm for $t\in \mathbb{R}.$
\end{cor}
\begin{proof}
Since $\cG=S\cG^\perp$, it is sufficient to prove the assertion for the case of $\cG$ and $J_\cG$.

There is a one-to-one correspondence $\cH\ni f\longleftrightarrow (x,y),$ $x=P_{\cG}f, y=P_{\cG^\perp}f$ and, hence, $S(x,y)=(Sy,Sx)$.

From \eqref{rev16b} one can easily seen that
\[
\widetilde\cG^\perp_t=\left\{(tJ_\cG S h,h):h\in\cG^\perp=S\cG\right\}
\]
and
\[
S\widetilde\cG_t=\left\{(-tJ_\cG g,Sg):g\in\cG\right\}=\left\{(tJ_\cG S h,h):h\in\cG^\perp\right\}=\widetilde\cG^\perp_t.
\]
Applying Theorem \ref{07aver} and formula \eqref{rev9a} we obtain the conjugation $\widetilde J(t)$ given by \eqref{rev16c} that anticommutes with $S$.

Since
\[
\begin{array}{l}
P_{\widetilde\cG_t^\perp}f=\cfrac{1}{1+t^2}(I+tJ_\cG S)(P_{\cG^\perp}-tSJ_\cG P_{\cG})f,\\[2mm]
P_{\widetilde\cG_t}f=\cfrac{1}{1+t^2}(I+tSJ_\cG )(P_{\cG}-tJ_\cG S P_{\cG^\perp}
)f
\end{array}
\]
for $f\in \cH$, the operator valued function
$$
\begin{array}{l}
\widetilde  J(t)=S(P_{\widetilde\cG_t}- P_{\widetilde\cG^\perp_t})\\[2mm]
=\cfrac{1}{1+t^2}\,S\left((1-t^2)(P_{\cG}-P_{\cG^\perp})+2t(SJ_\cG P_{\cG}-J_\cG SP_{\cG^\perp})\right)\\[2mm]
=\cfrac{1-t^2}{1+t^2}\, S(P_{\cG}-P_{\cG^\perp})-\cfrac{2t}{1+t^2}\left(J_\cG P_{\cG}+SJ_\cG S P_{\cG^\perp}\right)
\end{array}
$$
is continuous in the operator-norm topology on $\mathbb{R}$.
\end{proof}

\section{A $C$-Self-adjointness criterion based on quasi-analytic vectors}\label{quasianalytic}

First we recall the  definition of quasi-analytic vectors.

\begin{defn} Let $T$ be a linear or an antilinear operator on $\cH$. A vector $x\in \cD^\infty (T):=\bigcap_{n=1}^\infty \cD(T^n)$ is called \emph{quasi-analytic} for $T$ if
\begin{align*}
\sum_{n=1}^\infty \frac{1}{\|T^n x\|^{1/n} }=+\infty,
\end{align*}
The set of all quasi-analytic vectors of $T$ is denoted by $\cD^{qa}(T)$.
\end{defn}

The set $\cD^{qa}(T)$ contains all analytic vectors. Recall that a vector $x\in \cD^\infty (T)$ is called \emph{analytic} for $T$ if there exists a number $M>0$ such that
\begin{align*} 
\| T^n x\|\leq M^n n!\quad \textrm{for all}~~~ n\in N.
\end{align*}
Note that the sum of quasi-analytic vectors is not necessarily quasi-analytic.   In contrast, the analytic vectors for $T$ form a linear space.
\begin{thm}\label{thquasianalytic} Let $A$ be a $C$-symmetric linear operator on $\cH$. Suppose  that the linear span of the set  $\cD^{qa}(AC)$ is dense in $\cH$. Then $A$ is $C$-self-adjoint.
\end{thm}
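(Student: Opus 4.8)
The plan is to reduce the statement to Nussbaum's quasi-analytic vectors theorem applied to the symmetric operator $\gA$ on $\cH\oplus\cH$ from the matrix trick. By Proposition \ref{opermatrix}, $A$ is $C$-self-adjoint if and only if $\gA$ is self-adjoint, so it suffices to produce a dense set of quasi-analytic vectors for the symmetric operator $\gA$. The natural candidates are vectors of the form $(x,Cx)$ with $x\in\cD^{qa}(AC)$, since these are exactly the vectors fixed by the conjugation $\gC$, and one expects $\gA$ to behave on such vectors like the operator $AC$ does on $x$.

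First I would verify that if $x\in\cD^\infty(AC)$ then $(x,Cx)\in\cD^\infty(\gA)$, and more precisely compute the iterates of $\gA$ on such a vector. Using $\gA(x,y)=(Ay,CACx)$ one checks $\gA(x,Cx)=(ACx,\,CACx)=(ACx,\,C(ACx))$, so applying $\gA$ to a $\gC$-symmetric vector $(u,Cu)$ returns the $\gC$-symmetric vector $(ACu,C(ACu))$. By induction, $\gA^n(x,Cx)=\big((AC)^nx,\;C((AC)^nx)\big)$ for all $n$, provided all the relevant domain conditions hold; here one must be a little careful that $x\in\cD^\infty(AC)$ really does place $(x,Cx)$ in $\cD^\infty(\gA)$, which follows since at each stage the first coordinate $(AC)^kx$ lies in $\cD(AC)=\cD(A)\cap\,C^{-1}\cD$ appropriately and $C$ maps $\cD(AC)$ onto $\cD(CA)$, matching the domains in (\ref{matrixtbt1}). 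Consequently $\|\gA^n(x,Cx)\|^2 = \|(AC)^nx\|^2 + \|C((AC)^nx)\|^2 = 2\|(AC)^nx\|^2$, so $\|\gA^n(x,Cx)\| = \sqrt 2\,\|(AC)^nx\|$, and the quasi-analyticity series for $(x,Cx)$ with respect to $\gA$ diverges exactly when the one for $x$ with respect to $AC$ does (the constant $\sqrt2^{\,1/n}\to1$ does not affect divergence). Hence $x\in\cD^{qa}(AC)$ implies $(x,Cx)\in\cD^{qa}(\gA)$.

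Next I would check density. By hypothesis the linear span of $\cD^{qa}(AC)$ is dense in $\cH$. The linear span of $\{(x,Cx):x\in\cD^{qa}(AC)\}$ together with $\{(\ii x,-\ii Cx):x\in\cD^{qa}(AC)\}$ (equivalently, taking complex linear combinations and using that $(x,Cx)\pm(\ii x,-\ii Cx)$ gives access to both coordinates independently) spans a set whose closure contains $\cH\oplus\{0\}$ and $\{0\}\oplus\cH$, hence is all of $\cH\oplus\cH$. One should note that $\ii x$ need not lie in $\cD^{qa}(AC)$ as a set, but $(\ii x, -\ii Cx)=\ii\,(x,-Cx)$ and $(x,-Cx)=\gA$-quasi-analytic as well since the same iterate computation gives $\gA^n(x,-Cx)=\big((AC)^nx,\,(-1)^{n+1}C((AC)^nx)\big)$ with the same norm bound; so it is cleaner to say: the linear span of $\{(x,\pm Cx):x\in\cD^{qa}(AC)\}$ consists of quasi-analytic vectors for $\gA$ (here I use that these are genuine quasi-analytic vectors individually, not that the span is) — more carefully, I would invoke that Nussbaum's theorem only needs a total set of quasi-analytic vectors, and $\{(x,Cx):x\in\cD^{qa}(AC)\}\cup\{(x,-Cx):x\in\cD^{qa}(AC)\}$ is total in $\cH\oplus\cH$ because its span contains $\cD^{qa}(AC)\oplus\cD^{qa}(AC)$, which is dense. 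Therefore $\gA$ has a total set of quasi-analytic vectors, so by Nussbaum's theorem $\gA$ is essentially self-adjoint; but $\gA$ is already closed when $A$ is closed (its graph is a direct sum of the graphs of $A$ and $CAC$), hence $\gA$ is self-adjoint. By Proposition \ref{opermatrix}, $A$ is $C$-self-adjoint.

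The main obstacle I anticipate is the careful bookkeeping of domains in the iterate formula $\gA^n(x,Cx)=((AC)^nx,C((AC)^nx))$: one must confirm that membership of $x$ in $\cD^\infty(AC)=\bigcap_n\cD((AC)^n)$ is exactly what is needed to keep $(x,Cx)$ in $\cD^\infty(\gA)=\bigcap_n\cD(\gA^n)$, using repeatedly that $C$ interchanges $\cD(A)$ with $\cD(CAC)$ and more generally intertwines $AC$ and $CA$ appropriately, as recorded in the discussion around (\ref{matrixtbt1})–(\ref{matrixtbt2}). A secondary, purely cosmetic, point is to make sure $\gA$ is closed so that essential self-adjointness upgrades to self-adjointness; this is immediate since $A$ is assumed closed and the graph of $\gA$ is $\Gamma(CAC)\oplus\Gamma(A)$ under the obvious coordinate shuffle. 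Once these are in place, everything else is a direct appeal to Nussbaum's theorem and Proposition \ref{opermatrix}.
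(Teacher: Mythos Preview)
Your proposal is correct and follows essentially the same route as the paper: lift to the symmetric operator $\gA$ on $\cH\oplus\cH$, exhibit a total set of quasi-analytic vectors for $\gA$ built from $\cD^{qa}(AC)$, apply Nussbaum's theorem, and then invoke Proposition \ref{opermatrix}. The only cosmetic difference is that the paper uses the vectors $(x,0)$ and $(0,y)$ (for which $\|\gA^n(x,0)\|=\|(AC)^nx\|$ exactly, via the explicit block formulas for $\gA^{2n}$ and $\gA^{2n+1}$) instead of your diagonal vectors $(x,\pm Cx)$; note also that your sign pattern for $\gA^n(x,-Cx)$ should read $\big((-1)^n(AC)^nx,\,(-1)^{n+1}C(AC)^nx\big)$, though this does not affect the norms and hence leaves the argument intact.
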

\begin{proof}
Let the operator $\gA$ be given by \eqref{matrixtbt1} and let $n\in N$. By induction we verify
\begin{gather*}
\gA^{2n}=\left(
\begin{array}{ll}
(AC)^{2n} & ~~~~ 0 \\
~~~~ 0 & (CA)^{2n}
\end{array}\right)=\left(
\begin{array}{ll}
(AC)^{2n}  & ~~~~~ 0 \\
~~~~0 & C(AC)^{2n}C
\end{array}\right),
\end{gather*}\begin{gather*}
\gA^{2n+1}=\left(
\begin{array}{ll}
~~~~~~ 0 & (AC)^{2n}A \\
 (CA)^{2n}CAC &~~~~ 0
\end{array}\right)=\left(
\begin{array}{ll}
~~~~~ 0 & (AC)^{2n+1}C\\
 C(AC)^{2n+1} & ~~~~ 0
\end{array}\right).
\end{gather*}
Fix  $x\in \cD^{qa}(AC)$ and $y\in C\cD^{qa}(AC)$.  Then the preceding formulas yield
\[\begin{array}{l}
\gA^{2n}(x,y)=((AC)^{2n}x, C(AC)^{2n}Cy), \\[2mm]
 \gA^{2n+1 }(x,y)=((AC)^{2n+1}Cy, C(AC)^{2n+1}x).
\end{array}
\]
Using that $C$ is isometric this implies
\begin{align}\label{est1}
\|\gA^{2n}(x,y)\|^2&=\|(AC)^{2n}x\|^2 + \| (AC)^{2n}Cy\|^2\\ \|\gA^{2n+1}(x,y)\|^2&=\|(AC)^{2n+1}Cy\|^2 + \| (AC)^{2n+1}x\|^2.\label{est2}
\end{align}

Now we set $y=0$. By  $x\in \cD^{qa}(AC)$, formulas (\ref{est1}) and (\ref{est2}) imply that the vector $(x,0)$ is quasi-analytic for $\gA$. Similarly, setting $x=0$ and using that  $Cy\in \cD^{qa}(AC)$, we conclude  that $(0,Cy)$ is quasi-analytic for $\gA$. Therefore, since the linear span of $ \cD^{qa}(AC)$  is dense in $\cH$ by assumption, it follows that the linear span of $ \cD^{qa}(\gA)$ is dense in $\cH\oplus\cH$. Since $A$ is $C$-symmetric, the operator $\gA$ on $\cH\oplus\cH$ is symmetric. Therefore, Nussbaum's theorem (\cite{nussbaum}, see e.g. \cite[Theorem 7.14]{sch12}) applies and shows that the operator $\gA$ is self-adjoint. The equality $\gA=\gA^*$ implies $A^*=CAC$, that is, $A$ is $C$-self-adjoint.
\end{proof}
Note that the operator $AC$ occuring in Theorem \ref{thquasianalytic} is conjugate linear. However $C(AC)^{2n+1}$ and $(AC)^{2n}$, $n\in N$, are alternate product of linear operators $CAC$ and $A$; for instance,
$C(AC)^5=(CAC)A(CAC)A(CAC)$ and $(AC)^4=A(CAC)A(CAC)$.

\section{Polar decomposition}\label{polardecomposition}
Recall that each densely defined closed operator $A$ has a the polar decomposition $A=U_A|A|$. Here $|A|:=\sqrt{A^*A}$ is the \emph{modulus} of $A$ and $U_A$ is a partial isometry, called the \emph{phase operator} of $A$, with initial space\, $\ov{\cR(|A|) }$\, and final space\, $\ov{\cR(A) }$. We begin with some elementary facts.

\begin{prop}\label{polar1}
Suppose  $A$ is a densely defined closed linear operator and $C$ is a conjugation on $\cH$.  Then:
\begin{itemize}
\item[\em (i)]~ $|CAC|=C|A|C$~ and~  $U_{CAC}=CU_A C$, so $CAC=(CU_AC)\, (C|A|C)$ is the polar decomposition of the densel defined closed m operator $CAC$.
\item[\em(ii)] ~
If $A$ is $C$-real (that is, $CAC\subseteq A$), then $|CAC|=C|A|C=|A|.$
\end{itemize}
\end{prop}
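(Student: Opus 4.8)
\textbf{Proof plan for Proposition \ref{polar1}.}

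The plan is to prove (i) first by a direct manipulation of the defining properties of the polar decomposition, then deduce (ii) as an immediate corollary. For (i), I would start from the observation that $C$ is an antilinear isometric involution, so for any densely defined closed operator $A$ the operator $CAC$ is again densely defined and closed, with $(CAC)^* = CA^*C$ (this is the adjoint-pair computation already recorded in Section \ref{basic}). Hence $(CAC)^*(CAC) = CA^*C\cdot CAC = CA^*AC$. Now I would invoke the fact that for a positive self-adjoint operator $T$ and a conjugation $C$, the operator $CTC$ is positive self-adjoint and $\sqrt{CTC} = C\sqrt{T}\,C$; this follows because $(C\sqrt{T}C)^2 = C\sqrt{T}C C\sqrt{T}C = CTC$, the operator $C\sqrt{T}C$ is positive (as $\langle C\sqrt{T}Cx, x\rangle = \langle Cx, \sqrt{T}Cx\rangle \ge 0$ using \eqref{conjugation}) and self-adjoint, and the positive square root is unique. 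Applying this with $T = A^*A$ gives $|CAC| = \sqrt{CA^*AC} = C\sqrt{A^*A}\,C = C|A|C$.

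Next, for the phase operator: writing $A = U_A|A|$, I would compute $CAC = (CU_AC)(C|A|C) = (CU_AC)|CAC|$. It remains to check that $CU_AC$ is the phase operator of $CAC$, i.e.\ that it is a partial isometry with initial space $\overline{\cR(|CAC|)} = \overline{\cR(C|A|C)} = C\,\overline{\cR(|A|)}$ and kernel orthogonal to that space. Since $C$ is an antilinear isometry, $CU_AC$ is a partial isometry whenever $U_A$ is, with initial space $C(\text{initial space of }U_A) = C\,\overline{\cR(|A|)}$ and final space $C\,\overline{\cR(A)} = \overline{\cR(CAC)}$; the kernel condition transports across $C$ in the same way. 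By the uniqueness of the polar decomposition (the partial isometry with prescribed initial space is unique), $U_{CAC} = CU_AC$, and $CAC = (CU_AC)(C|A|C)$ is its polar decomposition.

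For (ii), if $A$ is $C$-real, i.e.\ $CAC \subseteq A$, then replacing $A$ by $CAC$ and using $C^2 = I$ also gives $A = C(CAC)C \subseteq CAC$, so in fact $CAC = A$; hence $|CAC| = |A|$, and combining with part (i) yields $C|A|C = |A|$.

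I do not anticipate a serious obstacle here: the whole argument is the standard ``conjugation commutes with the functional calculus'' phenomenon, the only points requiring a word of care being the uniqueness of the positive square root (to identify $|CAC|$) and the uniqueness of the partial isometry in the polar decomposition (to identify $U_{CAC}$), both of which are classical. The mildest subtlety is bookkeeping with domains — that $CAC$ is closed and densely defined, and that $\cD(C|A|C) = C\cD(|A|)$ — but this is exactly the adjoint-pair setup already established earlier in the paper.
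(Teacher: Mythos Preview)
Your proof of (i) is correct and follows essentially the same route as the paper: compute $(CAC)^*(CAC)=CA^*AC$, identify the modulus via uniqueness of the positive square root, then identify the phase via uniqueness of the partial isometry in the polar decomposition. The paper verifies that $CU_AC$ is a partial isometry by checking $VV^*V=V$ and computing $V^*V=C(U_A)^*U_AC$, whereas you appeal directly to $C$ being an antilinear isometry; these are the same argument in slightly different dress.

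For (ii) you take a genuinely different and shorter route. The paper argues via adjoints: from $CAC\subseteq A$ it deduces $A^*\subseteq CA^*C$, hence $|CAC|^2=CA^*AC\subseteq A^*CAC\subseteq A^*A=|A|^2$, and then uses that a self-adjoint operator contained in another self-adjoint operator must equal it. Your observation that $CAC\subseteq A$ implies, after conjugating by $C$, the reverse inclusion $A=C(CAC)C\subseteq CAC$, so in fact $CAC=A$, is correct and strictly stronger than what is stated (it gives $U_{CAC}=U_A$ as well, not just equality of moduli). Your argument is more elementary; the paper's approach has the minor virtue of illustrating how the inclusion propagates through the modulus without first upgrading $C$-reality to an equality, but your route is cleaner.
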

\begin{proof}
(i): Clearly, $C|A|C$ is a positive self-adjoint operator. Since $(CAC)^*=CA^*C$, we have
\begin{align}\label{cacstar} (&C|A|C)(C|A|C)=C|A|^2C=CA^*AC\nonumber\\ &=(CA^*C)(CAC)=(CAC)^*(CAC)=|CAC|^2.
\end{align}
Thus, $C|A|C$ is a positive square root of the positive self-adjoint operator $|CAC|^2$. From the uniqueness of the positive square root we conclude that $C|A|C=|CAC|.$\smallskip

Set $V:=CU_AC$. Using that $U_A(U_A)^*U_A=U_A$, because $U_A$ is a partial isometry, we derive
 \begin{align*}
VV^*V&=CU_AC(CU_AC)^*CU_AC=CU_ACC(U_A)^*CCU_AC\\ &=CU_A(U_A)^*U_AC=CU_AC=V.
\end{align*}
Therefore, $V$ is a partial isometry such that
\begin{align}\label{VUArange}
 V^*V=(CU_AC)^*CU_AC=C(U_A)^*U_AC.
\end{align}
By the properties of the polar decomposition the partial isometry $U_A$ has the intial space $\ov{\cR(|A|) }$. Hence  it follows from (\ref{VUArange}) that the intial space of $V$ is the closure of
$C\, \cR(|A|)= \cR(C|A|C) =\cR(|CAC|).$ We have $$CAC= CU_A|A|C=(CU_AC)\, (C|A|C)= V\, |CAC|. $$
Therefore, by the preceding, the uniqueness assertion for the polar decomposition (\cite[Theorem 7.2]{sch12}) yields $V=U_{CAC}$. \smallskip

(ii): From $CAC\subseteq A$ we obtain  $A^*\subseteq (CAC)^*=CA^*C$. Using (\ref{cacstar}) we get $$
|CAC|^2=CA^*AC\subseteq C(CA^*C)AC=A^*CAC\subseteq A^*A=|A|^2.$$
Therefore, since $|CAC|^2$ and $|A|^2$ are self-adjoint operators, it follows that $|CAC|^2=|A|^2$ and hence $ C|A|C= |A|$ by the uniqueness of the positive square root. Combined with (i) this gives the assertion.
\end{proof}
The following notion was invented in \cite{gp1}. It is the conjugate linear counter-part of a partial isometry.
\begin{defn}
A \emph{partial conjugation} is a bounded conjugate linear mapping $J:\cH\to\cH$  such that $J \uphar\cN(J)^\bot$ is a conjugation of $\cN(J)^\bot $ into $\cH$.
\end{defn}
In this case the closed linear subspace $\cR(J)=\cN(J)^\bot$ is called the \emph{intial space} of $J$ and $J^2$ is the orthogonal projection on this space.

The following theorem gives some characterizations of $C$-self-adjoint operators in terms of the polar decomposition.
The notion of partial conjugation and the characterization (iii)  appeared in \cite{gp1} for bounded operators and in \cite{camara} in the general case.

\begin{thm}\label{polar2}
For a densely defined closed linear operator $A$ on $\cH$ the following are equivalent:
\begin{itemize}
\item[ (i)]~ $A$ is $C$-self-adjoint.
\item[(ii)] ~ $(U_A)^*=CU_AC$ and $|A^*|=C|A|C.$
\item[ (iii)] ~ There exist a positive self-adjoint operator $T$ and a partial conjugation $J$ on $\cH$ with initial space $\ov{\cR(T)}$ such that $JT\subseteq TJ$  and
\begin{align*}
A=CJT.
\end{align*}
\end{itemize}
If (iii) is satisfied, then $T=|A|$ and $CJ=U_A$.
\end{thm}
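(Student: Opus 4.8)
The plan is to prove the chain of implications (i)$\Rightarrow$(ii)$\Rightarrow$(iii)$\Rightarrow$(i), using Proposition~\ref{polar1} as the principal tool. For (i)$\Rightarrow$(ii): if $A$ is $C$-self-adjoint then $A^*=CAC$, so applying Proposition~\ref{polar1}(i) to $CAC$ gives $|A^*|=|CAC|=C|A|C$ and $U_{A^*}=U_{CAC}=CU_AC$. It remains to identify $U_{A^*}$ with $(U_A)^*$; this is the standard polar-decomposition fact that the phase operator of $A^*$ is the adjoint of the phase operator of $A$ (both are partial isometries, and from $A=U_A|A|$ one gets $A^*=|A|(U_A)^*=(U_A)^*(U_A|A|(U_A)^*)=(U_A)^*|A^*|$, with $(U_A)^*$ having the correct initial space $\overline{\cR(|A^*|)}=\overline{\cR(A)}$). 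Combining these yields $(U_A)^*=CU_AC$ and $|A^*|=C|A|C$.

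For (ii)$\Rightarrow$(iii): set $T:=|A|$ and $J:=CU_A$. Then $J$ is conjugate linear and bounded; from $(U_A)^*=CU_AC$ we get $U_AC=C(U_A)^*$, hence $J^2=CU_ACU_A=C C(U_A)^*U_A=(U_A)^*U_A$, which is the orthogonal projection onto $\overline{\cR(|A|)}=\overline{\cR(T)}$, so $J$ is a partial conjugation with initial space $\overline{\cR(T)}$. Moreover $A=U_A|A|=CCU_A|A|=C(CU_A)T=CJT$, and $CJ=CCU_A=U_A$. The remaining assertion $JT\subseteq TJ$ should follow from the second half of (ii): we have $TJ=|A|CU_A$ and $JT=CU_A|A|=CA$, while $A^*=|A|(U_A)^*=|A|CU_AC$; using $|A^*|=C|A|C$ one rewrites $A=C|A^*|U_A C^{-1}$-type identities to compare $CA=JT$ with $TJ$ on $\cD(T)=\cD(A)$. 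The cleanest route is probably: $C|A|C=|A^*|$ together with $(U_A)^*=CU_AC$ gives $C A^* = C\,|A|(U_A)^* = (C|A|C)(CU_A)C^{-1}\cdots$; I expect this to reduce, after using $A^*=CAC$ (which (ii) re-implies), to the inclusion $CU_A|A|\subseteq |A|CU_A$, i.e. $JT\subseteq TJ$.

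For (iii)$\Rightarrow$(i): given $A=CJT$ with $T\geq 0$ self-adjoint, $J$ a partial conjugation with initial space $\overline{\cR(T)}$, and $JT\subseteq TJ$, I first check that $CJ$ is a partial isometry (since $J^2=P_{\overline{\cR(T)}}$, $(CJ)^*CJ=JC^*CJ=J^2=P_{\overline{\cR(T)}}$), with initial space $\overline{\cR(T)}$; hence $A=(CJ)T$ is the polar decomposition of $A$, so $|A|=T$ and $U_A=CJ$. Then one computes $A^*$: from $A=CJT$ one has $A^*\supseteq T^*J^*C^*=TJC$ (and equality holds since $TJC$ is closed on the appropriate domain), and $CAC=C(CJT)C=JTC$; the intertwining $JT\subseteq TJ$ converts $JTC$ into $TJC$, giving $CAC=A^*$. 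The final ``if (iii) is satisfied'' clause is then exactly the identifications $T=|A|$, $CJ=U_A$ already extracted.

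The main obstacle will be the careful bookkeeping of domains and the role of the intertwining condition $JT\subseteq TJ$: unlike the bounded case treated in \cite{gp1}, one must be attentive to whether $TJ$ and $JT$ have the same domain and whether the inclusions above are equalities. I expect the key lemma to be that under $JT\subseteq TJ$ the operator $TJC$ (resp. $JTC$) with its natural maximal domain coincides with $(CJT)^*$ (resp. with $C(CJT)C$), which should follow by a standard argument: $JTC$ is closable with closure contained in $(CJT)^*$, and a dimension/deficiency or range comparison forces equality. This is precisely where the assumption that $J$ has initial space exactly $\overline{\cR(T)}$ (not merely $\supseteq$) is used.
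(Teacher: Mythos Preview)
Your plan uses the same ingredients as the paper (Proposition~\ref{polar1} and uniqueness of the polar decomposition), but two steps are left genuinely open, and the paper's argument shows precisely how to close them.  First, your chain (i)$\Rightarrow$(ii)$\Rightarrow$(iii)$\Rightarrow$(i) is circular at (ii)$\Rightarrow$(iii): to obtain $JT\subseteq TJ$ you invoke $A^*=CAC$, i.e.\ condition (i), which in a pure cycle you have not yet derived from (ii).  The paper resolves this by proving (ii)$\Rightarrow$(i) directly: from (ii) and Proposition~\ref{polar1}(i) one has $U_{A^*}=(U_A)^*=CU_AC=U_{CAC}$ and $|A^*|=C|A|C=|CAC|$, so $A^*$ and $CAC$ share both modulus and phase and hence coincide.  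With (i)$\Leftrightarrow$(ii) available, the intertwining follows from the one-line computation
\[
JTJ=CU_A\,|A|\,CU_A=(CAC)U_A=A^*U_A=|A|=T,
\]
which in particular gives $JT\subseteq TJ$.

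Second, in (iii)$\Rightarrow$(i) your argument only yields $CAC=JTC\subseteq TJC=A^*$, i.e.\ $C$-symmetry; the upgrade to equality is the whole point, and the ``dimension/deficiency or range comparison'' you propose is not the mechanism that works here.  The paper's key observation is that $JT\subseteq TJ$ gives $JTJ\subseteq T$, while the same inclusion also forces $\cD(T)\subseteq\cD(TJ)=\cD(JTJ)$, so in fact $JTJ=T$ as operators.  Rather than attacking $CAC=A^*$ head-on, the paper then verifies (ii): after identifying $U_A=CJ$ and $|A|=T$ (which you do correctly), one computes $(CJ)^*=JC$ so that $CU_AC=JC=(U_A)^*$, and
\[
C|A|C=CTC=C(JTJ)C=(CJ)T(JC)=U_A|A|(U_A)^*=|A^*|.
\]
The already-established implication (ii)$\Rightarrow$(i) then finishes the proof, avoiding the delicate domain comparison you anticipated.
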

\begin{proof}
(i)$\to$(ii): That $A$ is $C$-self-adjoint means that $A^*=CAC$. Therefore, $(U_A)^*= U_{A^*}=U_{CAC}=CU_AC$ and $|A^*|=|ACA|=C|A|C $ by Proposition \ref{polar1}(i).\smallskip

(ii)$\to$(i): From (ii) and  Proposition \ref{polar1}(i), $U_{A^*}=(U_A)^*=CU_AC=U_{CAC}$ and $A^*=C|A|C=|CAC$. Hence the operators $A^*$ and $CAC$ have the same moduli and the same phase operators. From the polar decomposition it follows that both operators coincide, that is, $A$ is $C$-self-adjoint.\smallskip

(ii)$\to$(iii): Set $T:=|A|$ and $J:=CU_A$. Since $U_A$ is a partial isometry, it is easily verified that $J$ is a partial conjugation. Since $C(U_A)^*=CU_A$ by (ii), we have and $J^2=(((U_A)^+C)(CU_A)=(U_A)^*U_A$, so $J$ has the same initial space $\ov{\cR(|A|)}$ as $U_A$. Then $A=U_A |A|=CJ T$.

It remains to show that $JT\subseteq TJ$. We have $CAC=A^*$ by the equivalence of (i) and (ii). Using this equality we derive
 \begin{align*}
JTJ=J|A|J=CU_A|A|CU_A=CACU_A =A^*U_A=|A|=T,
\end{align*}

(iii)$\to$(ii):
Put $U:=CJ$. Then $U^*=JC$. Indeed, for $x,y\in \cH$ we compute
\begin{align*}
\langle Ux,y\rangle=\langle CJx,y\rangle=\langle Cy,Jx\rangle=\langle x,JCy\rangle.
\end{align*}
Hence $UU^*U=CJJCCJ=CJ=U$, so the bounded linear operator $U$ is a partial isometry and $U^*U=JCCJ=J^2$ is the projection on its initial space. By assumption this space is
$\ov{\cR(T)}$ and we have $A=CJT=UT$. Therefore, applying once more the uniqueness assertion for the polar decomposition we obtain $U=U_A$ and $T=|A|$.

Now we verify the two equations stated in (ii). First we derive $$CU_AC= CUC=CCJC=JC=U^*=(U_A)^*.$$
Next we note that $JT\subseteq TJ$ implies $JTJ\subseteq T$ and then $JTJ=T$, because $T$ and hence $JTJ$ are self-adjoint. Then
\begin{align*}
C|A|C= CTC=C(JTJ)C=UTU^*=U_A|A|(U_A)^*=|A^* |.
\end{align*}

The last assertion  in Theorem \ref{polar2} was  shown in the proof of the implication (iii)$\to$(ii).
\end{proof}

\smallskip

\subsection*{Acknowledgement}
 Yu.~Arlinski\u{\i} would like to thank  the Deutsche Forschungsgemeinschaft for support from the research grant SCHM1009/6-1.

\end{document}